\newtheorem{defi}{Definition}[section]
\newtheorem{proposition}[defi]{Proposition}
\newtheorem{cor}[defi]{Corollary}
\newtheorem{lemma}[defi]{Lemma}
\newtheorem{theorem}[defi]{Theorem}
\newcommand{\defeq}{\mathrel{\mathrm{\raise0.1ex\hbox{:}\hbox{=}\strut}}}
 \DeclareMathOperator{\Tr}{Tr}
 \DeclareMathOperator{\Lip}{Lip}
\def\R{\mathbb R}
\def\N{\mathbb{N}}
\def\E{\mathbb E}
\def\la{\langle}
\def\ra{\rangle}
\def\ve{\varepsilon}
\def\test{\mathcal{F}C_b^{2}(D(A))}
\def\V{V_{\frac 12}}
\def\V-{V_{-\frac 12}}
\def\cB{\mathcal{B}}
\begin{document}
\numberwithin{equation}{section}

\title[Improved moment estimates for invariant measures of SPDE's]
{Improved moment estimates for invariant measures of semilinear diffusions
in Hilbert spaces and applications}
\author[A. Es--Sarhir]{Abdelhadi Es--Sarhir}
\author[W. Stannat]{Wilhelm Stannat}
\address{Technische Universit\"at Berlin, Fakult\"at II, Institut f\"ur Mathematik,
Sekr. Ma 7-4\newline Stra{\ss}e des 17. Juni 136, D-10623 Berlin, Germany}
\email{essarhir@math.tu-berlin.de}
\address{Technische Universit\"at Darmstadt, Fachbereich Mathematik, \newline Schlo\ss gartenstra\ss e 7,
D-64289 Darmstadt, Germany}
\email{stannat@mathematik.tu-darmstadt.de}

\keywords{Stochastic differential equations, Invariant measures,
Moment estimates, Stochastic Burgers equations.}

\subjclass[2000]{47D07, 35K90, 60H15, 35R60}

\begin{abstract}
We study regularity properties for invariant measures of semilinear
diffusions in a separable Hilbert space. Based on a pathwise
estimate for the underlying stochastic convolution, we prove a
priori estimates on such invariant measures. As an application, we
combine such estimates with a new technique to prove the
$L^1$-uniqueness of the induced Kolmogorov operator, defined on a
space of cylindrical functions. Finally, examples of stochastic
Burgers equations and thin-film growth models are given to
illustrate our abstract result.
\end{abstract}

\maketitle

\section{Introduction}
\noindent
The aim of this work is to obtain improved moment estimates of invariant measures
of semilinear stochastic evolution equations of the type
\begin{equation}
\label{sde0}
dX(t)=\Big(AX(t)+B(X(t))\Big)dt + \sqrt{Q}dW_t,\quad t\geq 0
\end{equation}
defined on a separable real Hilbert space $H$. Here $A$ is a self-adjoint linear
operator of negative type $\omega$ on $H$ having a compact resolvent, $B$ is a
nonlinear function with subdomain $D(B)\subset H$. $Q$ is a symmetric positive
definite operator and $(W_t)_{t\ge 0}$ is a cylindrical Wiener process in $H$
defined on a filtered probability space $(\Omega,\mathcal{F},(\mathcal{F}_t)_{t\ge 0},
\mathbb{P})$.

\medskip
\noindent Equation \eqref{sde0} can be read as an abstract
formulation of many partial differential equations perturbed by
random noise such as stochastic reaction diffusion, Allen-Cahn,
Burgers and Navier-Stokes equations. Existence and uniqueness of
solutions to such equations are well studied, we refer to the
monographs by Da Prato, Zabczyk \cite{DaZ1,DaZ2}, Cerrai
\cite{cerrai01} and the works \cite{Da-De-Te,GoMa}. We will be in
particular interested in the situation, where \eqref{sde0} has a
mild solution $X(t)$, $t\ge 0$, with a time-invariant distribution
$\mu = \mathbb{P}\circ X(t)^{-1}$. Throughout this paper, we call
such a solution a stationary mild solution and $\mu$ an invariant
measure of \eqref{sde0}. Given such a stationary mild solution, we
will then derive in Section \ref{section3} moment estimates on its
time-invariant distribution $\mu$ under appropriate assumptions on
the coefficients of \eqref{sde0}.

\medskip
\noindent
Moment estimates for invariant measures of stochastic partial differential equations have been
studied quite intensively for some time. Recently, in the case where $B$ is locally Lipschitz, the
authors proved in \cite{Es-St} existence and moment estimates of an invariant measure $\mu$
corresponding to \eqref{sde0} under a Lyapunov type assumption on the coefficients $A$ and $B$.
These moment estimates have been the main tool to discuss well-posedness of the parabolic Cauchy
problem corresponding to stochastic reaction diffusion or Allen-Cahn equations in $L^1(\mu)$.
However, there are many important examples, e.g. the stochastic Burgers equation, that are still
not covered by our analysis. The results in this paper can be seen as improved moment
estimates on invariant measures to semilinear diffusions under weaker assumptions on its coefficients.

\medskip
\noindent
The main ingredient, to obtain our moment estimates, is a
pathwise control on the stochastic convolution arising in the mild
formulation of \eqref{sde0}. This idea is taken from the paper
\cite{Fl-Ga:94} by Flandoli and Gatarek on stochastic Navier-Stokes
equations, see also the paper \cite{Da-De:07} by Da Prato and
Debussche where the same idea has been applied to the stochastic
Burgers equation. We have generalized this technique and found
simplified proofs to apply the same technique in an abstract
context. To illustrate this result we discussed at the end examples
of stochastic Burgers equations and thin-film growth models. We
shall remark that the same result can be proved for stationary
solutions of stochastic Navier-Stokes equations in the spirit of
Flandoli and Gatarek \cite{Fl-Ga:94}.

\medskip
\noindent
The existence of a stationary mild solution is a rather
weak assumption on the equation \eqref{sde0} and in particular does
not imply neither the existence of an associated full Markov process
nor an associated transition semigroup $(P_t)_{t\ge 0}$. The
existence of $(P_t)_{t\ge 0}$, however, can be obtained from the
Hille-Yosida theory, in the case, where the Kolmogorov operator
associated with \eqref{sde0} $(L,D(L))$ (resp. its closure on
suitable test functions) generates a $C_0$-semigroup in $L^1 (H, \mu
)$. Based on the improved moment estimates on $\mu$ we will
therefore study the existence (and uniqueness) of $(P_t)_{t\ge 0}$
in Section 4. The method which we follow here is new and different
to the one presented in \cite{St:99} due the fact that the drift
term $B$ is not supposed to be dissipative and the coefficients of
the finite dimensional realization of $L$ are not bounded. Hence we
can not use the classical theory by \cite{krylov} to obtain
uniform gradient estimates for the pseudo-resolvents associated with
finite dimensional approximations of $L$.

\bigskip
\noindent
Let us now specify our precise assumptions:

\begin{enumerate}

\item[${\bf (H_0)}$] $A$ is selfadjoint, $\|e^{tA}\|\leq e^{-\omega t}$ for certain
$\omega>0$ and its resolvent $A^{-1}$ (which exists) is compact.

\item[${\bf (H_1)}$] $B : D(B)\subset H\to H$ is a measurable vector-field, defined on a measurable
subset $D(B)\subset H$. We will always consider $B$ as everwhere defined, by setting $B(h) = 0$ for
$h\notin D(B)$.

\item[${\bf (H_2)}$] $Q$ is a bounded, nonnegative, symmetric operator such that $A$ and $Q$ are
simultaneously diagonizable and there exist $\nu\in ]0, \frac 12[$ such that for all $t > 0$
$$
\int_0^t s^{-2\nu} \|\sqrt{Q}e^{sA}\|^2_{HS} \, ds < \infty\, .
$$
\item [${\bf (H_3)}$] There exists a mild solution
\begin{equation*}
X(t)=e^{tA}X_0 + \int_0^t e^{(t-s)A}B(X(s))ds+\int_0^t e^{(t-s)A}\sqrt{Q}dW_s,\quad t\ge 0\, ,
\end{equation*}
of \eqref{sde0}  having a time-invariant distribution $\mu =
\mathbb{P}\circ X(t)^{-1}$.
\end{enumerate}


\medskip
\noindent We shall introduce the following interpolation spaces: For
$\theta\in\R$ let
$$
V_\theta\defeq (D((-A)^{\theta}),\|\cdot\|_{\theta}),\quad\mbox{ where $\|x\|_{\theta}
= \la (-A)^{\theta} x,(-A)^{\theta} x\ra$ for $x\in V_{\theta}$}.
$$

\noindent
Hypotheses ${\bf (H_2)}$ implies that the stochastic convolution $W_A(t)$ defined by
$$
W_A(t)\defeq \int_0^t e^{(t-s)A}\sqrt{Q}dW_s
$$
is well defined and satisfies the uniform moment estimate
\begin{equation}\label{M}
M := \sup\limits_{t\geq0}\E\left( \|W_A(t)\|_{\gamma}^2 \right)
   = \int_0^{\infty}\|(-A)^{\gamma}e^{tA}\sqrt{Q}\|_{HS}^2\:dt <
\infty,\quad 0 < \gamma < \nu\, .
\end{equation}

See \cite {Es,Es-St:09} for more details.

\section{Pathwise estimates for stochastic convolutions}
\label{section2}

\noindent
The aim of this section is to prove a pathwise estimate for the stochastic convolution
associated with the linear operator $A$. The estimate will be useful in the next section
to obtain improved moment estimates on $\mu$. We start with the following 1-dimensional result:

\begin{proposition}
\label{1-dim-est}
Let $(\beta(t))_{t\geq 0}$ be a $1$-dimensional Brownian motion. For $t\geq 0$  set
\begin{equation}
W_{-\lambda}(t)=\int_0^te^{-\lambda(t-s)}\:d\beta(s),\quad
\lambda>0.
\end{equation}
Then for all $\delta\in(0,\frac 12)$
\begin{equation}
\sup\limits_{0\leq t\leq T}|W_{-\lambda}(t)|\leq
\lambda^{-\delta}\cdot C_{\delta} M(\delta, T)
\end{equation}
with
$$
C_{\delta}\defeq
\Gamma(\delta+1)+\delta^{\delta}e^{-\delta},\quad M(\delta, T)\defeq
\sup\limits_{0\leq s\leq t\leq T}\frac{|\beta(t)-\beta(s)|}{|t-s|^{\delta}}.
$$

\smallskip
\noindent
Moreover,
\begin{equation}
\label{est} \E(M(\delta, T)^m)\leq M\cdot T^{m(\frac
12-\delta)}\quad\mbox{ for all }\: m\geq 1.
\end{equation}
for some constant $M$ that is independent of $\lambda$ and $T$.
\end{proposition}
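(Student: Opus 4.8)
The plan is to prove the two claims separately. For the pathwise bound on $\sup_{0\le t\le T}|W_{-\lambda}(t)|$, the natural starting point is the integration-by-parts (or "factorization"-type) identity that rewrites the Ornstein--Uhlenbeck integral in terms of increments of $\beta$. Writing $W_{-\lambda}(t)=\beta(t)-\lambda\int_0^t e^{-\lambda(t-s)}\beta(s)\,ds$ and then regrouping, one gets
\begin{equation*}
W_{-\lambda}(t)=\int_0^t \lambda e^{-\lambda(t-s)}\bigl(\beta(t)-\beta(s)\bigr)\,ds,
\end{equation*}
a representation valid pathwise (no stochastic integral appears on the right). This is the key step: it converts the problem into a deterministic estimate on a convolution of Brownian increments. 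I expect this reformulation, together with choosing the right way to split the integral, to be the main obstacle — everything afterwards is bookkeeping.

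Next I would bound $|\beta(t)-\beta(s)|\le M(\delta,T)\,|t-s|^{\delta}$ by the very definition of $M(\delta,T)$, so that
\begin{equation*}
|W_{-\lambda}(t)|\le M(\delta,T)\int_0^t \lambda e^{-\lambda(t-s)}(t-s)^{\delta}\,ds
= M(\delta,T)\int_0^{\lambda t} e^{-r} \Bigl(\tfrac{r}{\lambda}\Bigr)^{\delta}\,dr
\le \lambda^{-\delta} M(\delta,T)\,\Gamma(\delta+1),
\end{equation*}
after substituting $r=\lambda(t-s)$ and extending the upper limit to $\infty$. This already gives $\lambda^{-\delta}\Gamma(\delta+1)M(\delta,T)$, which is slightly better than the claimed constant $C_\delta=\Gamma(\delta+1)+\delta^{\delta}e^{-\delta}$; the extra term $\delta^{\delta}e^{-\delta}$ presumably appears if one instead uses the raw representation $W_{-\lambda}(t)=\beta(t)-\lambda\int_0^t e^{-\lambda(t-s)}\beta(s)\,ds$ and estimates $|\beta(t)|\le M(\delta,T)t^{\delta}$ and the integral term separately, with $\sup_{u\ge0}u^{\delta}e^{-u}=\delta^{\delta}e^{-\delta}$ entering the bound on $\lambda\int_0^t e^{-\lambda(t-s)}s^{\delta}\,ds\le$ (a shift by $(t-s)$) $\le \lambda^{-\delta}(\Gamma(\delta+1)+\delta^\delta e^{-\delta})$ — I would follow whichever of these two routes the authors intend, but either way the computation is elementary and the $\lambda^{-\delta}$ scaling is immediate from the substitution.

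For the moment bound \eqref{est}, the point is the Gaussian scaling of Brownian motion: $M(\delta,T)$ has the same law as $T^{1/2-\delta}M(\delta,1)$, since $(\beta(Tu))_{u}\stackrel{d}{=}(\sqrt{T}\,\tilde\beta(u))_u$ gives $\sup_{0\le s\le t\le T}|\beta(t)-\beta(s)|/|t-s|^{\delta}\stackrel{d}{=}\sqrt T\cdot T^{-\delta}\sup_{0\le s\le t\le 1}|\tilde\beta(t)-\tilde\beta(s)|/|t-s|^{\delta}$. Hence $\E(M(\delta,T)^m)=T^{m(1/2-\delta)}\E(M(\delta,1)^m)$, so it only remains to know that $\E(M(\delta,1)^m)<\infty$ for every $m\ge1$; this is the classical Kolmogorov--Chentsov / Garsia--Rodemich--Rumsey estimate for Brownian motion, which for $\delta<1/2$ gives finite moments of all orders of the Hölder seminorm. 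Absorbing $\sup_m$-type constants into the single constant called $M$ (abusing the notation of \eqref{M}) finishes the argument; I would just cite Garsia--Rodemich--Rumsey or the standard references \cite{DaZ1,Es,Es-St:09} for this last fact rather than reprove it.
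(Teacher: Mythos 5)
Your key identity is wrong, and this is not a cosmetic slip: you write
\[
W_{-\lambda}(t)=\int_0^t \lambda e^{-\lambda(t-s)}\bigl(\beta(t)-\beta(s)\bigr)\,ds ,
\]
but since $\lambda\int_0^t e^{-\lambda(t-s)}\,ds = 1-e^{-\lambda t}$, regrouping $\beta(t)-\lambda\int_0^t e^{-\lambda(t-s)}\beta(s)\,ds$ actually leaves a boundary term:
\[
W_{-\lambda}(t)=\lambda\int_0^t e^{-\lambda(t-s)}\bigl(\beta(t)-\beta(s)\bigr)\,ds + e^{-\lambda t}\beta(t).
\]
This is the identity the paper uses. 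The fact that your version yields the ``slightly better'' constant $\Gamma(\delta+1)$ alone should have been the warning sign: the missing term $e^{-\lambda t}\beta(t)$ is bounded (using $\beta(0)=0$) by $e^{-\lambda t}t^{\delta}M(\delta,T)\le \lambda^{-\delta}\delta^{\delta}e^{-\delta}M(\delta,T)$ via $\sup_{u\ge 0}u^{\delta}e^{-u}=\delta^{\delta}e^{-\delta}$, and that is precisely the second summand in $C_\delta$. Your proposed fallback --- estimating $|\beta(t)|\le M(\delta,T)t^{\delta}$ and the convolution term separately from the raw representation $W_{-\lambda}(t)=\beta(t)-\lambda\int_0^t e^{-\lambda(t-s)}\beta(s)\,ds$ --- does not repair this, because the term $|\beta(t)|\le M(\delta,T)t^{\delta}$ carries no factor of $\lambda$ at all, so that route cannot produce the crucial $\lambda^{-\delta}$ decay. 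The increment representation \emph{with} the boundary term is what makes both pieces decay like $\lambda^{-\delta}$; once you have it, your bound on the convolution term by $\lambda^{-\delta}\Gamma(\delta+1)M(\delta,T)$ is exactly the paper's computation.

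The second half of your argument is fine and is a legitimate alternative to the paper's: you reduce $\E(M(\delta,T)^m)=T^{m(\frac 12-\delta)}\E(M(\delta,1)^m)$ by Brownian scaling and then invoke finiteness of all moments of the $\delta$-H\"older seminorm on $[0,1]$ (Garsia--Rodemich--Rumsey, or Fernique applied to this Gaussian seminorm), whereas the paper simply cites Th\'eor\`eme 3 of Roynette \cite{R} (see also \cite{BY}) for the bound with the explicit $T$-dependence. Either is acceptable; your scaling step makes the $T$-dependence transparent.
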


\begin{proof}
It\^{o}'s product rule implies that
\begin{equation}
\begin{split}
W_{-\lambda}(t)&=\beta(t)-\lambda\int_0^te^{-\lambda(t-s)}\beta(s)\:ds\\
&=\lambda
\int_0^te^{-\lambda(t-s)}(\beta(t)-\beta(s))\:ds+e^{-\lambda
t}\beta(t),
\end{split}
\end{equation}
so that for $t\leq T$
\begin{equation}
\begin{split}
|W_{-\lambda}(t)|&\leq \lambda
\int_0^te^{-\lambda(t-s)}(t-s)^{\delta}\:ds\cdot M(\delta,
T)+e^{-\lambda t}\cdot t^{\delta}\cdot M(\delta, T)\\
&\leq \Big(\lambda \int_0^{+\infty} e^{-\lambda
s}s^{\delta}\:ds+\delta^{\delta}e^{-\delta}\cdot
\lambda^{-\delta}\Big)\cdot M(\delta,T)\\
&=\lambda^{-\delta}\Big(\Gamma(\delta+1)+\delta^{\delta}e^{-\delta}\Big)\cdot
M(\delta, T).
\end{split}
\end{equation}
The moment estimate \eqref{est} follows from Th\'eor\`eme 3 in \cite{R} (see also \cite{BY}).
\end{proof}

\noindent
We can now apply the Proposition to obtain a pathwise estimate on the stochastic convolution
$$
W_{A-\lambda}(t)\defeq \int_0^te^{(t-s)(A-\lambda)}\sqrt{Q}\:dW(s),\quad \lambda>0.
$$
To this end, denote by $(\lambda_{k})_{k\geq 1}$ and $(q_{k})_{k\geq 1}$ the eigenvalues of
$-A$ and $Q$ respectively corresponding to the same eigenbasis $(e_k)_{k\geq 1}$ in $H$.
Then the last Proposition implies

\begin{cor}
\label{Cor}
Let $\delta\in (0,\frac 12)$ and $\gamma\in \R$. Then
\begin{equation*}
\sup\limits_{0\leq t\leq T}\|W_{A-\lambda}(t)\|_{{\gamma}}^2\leq
C_{\delta}^2\sum\limits_{k=1}^{+\infty}\frac{\lambda_k^{2\gamma}q_k}{(\lambda+\lambda_k)^{2\delta}}
M_k(\delta, T)^2.
\end{equation*}
Here,
$$
M_k(\delta, T) \defeq \sup\limits_{0\leq s<t\leq T}\frac{|\beta_k(t)-\beta_k(s)|}{|t-s|^{\delta}},\quad k\geq 1
$$
are independent random variables satisfying the moment estimate \eqref{est}. In particular, if there exists
$\ve>0$ such that
\begin{equation}
\label{Z-est}
Z_{\gamma,\delta,\ve}\defeq \sum\limits_{k\geq 1}^{+\infty}\lambda_k^{-2(\delta-\gamma-\ve)}q_k<+\infty,
\end{equation}
then
\begin{equation}\label{W-est}
\sup\limits_{0\leq t\leq T}\|W_{A-\lambda}(t)\|_{{\gamma}}^2\leq
\lambda^{-2\ve}\cdot M_{\delta,\gamma,\ve}
\end{equation}
for some random variable $M_{\delta,\gamma,\ve}$, independent of $\lambda$, having finite moments of any order.
\end{cor}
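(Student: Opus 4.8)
The plan is to diagonalise the stochastic convolution along the common eigenbasis $(e_k)_{k\ge1}$ of $A$ and $Q$, and then apply Proposition~\ref{1-dim-est} coordinatewise. Write $W(t)=\sum_{k\ge1}\beta_k(t)e_k$, so that the $\beta_k(t)\defeq\la W(t),e_k\ra$ are independent one-dimensional Brownian motions. Since $Ae_k=-\lambda_ke_k$ and $\sqrt Qe_k=\sqrt{q_k}\,e_k$, the $k$-th Fourier coefficient of $W_{A-\lambda}(t)$ equals $\sqrt{q_k}\int_0^te^{-(\lambda+\lambda_k)(t-s)}\dd\beta_k(s)$, i.e. $\sqrt{q_k}$ times the scalar convolution $W_{-(\lambda+\lambda_k)}(t)$ of Proposition~\ref{1-dim-est} built from $\beta_k$. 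Interpreting $\|x\|_{\gamma}^2=\sum_k\lambda_k^{2\gamma}\la x,e_k\ra^2$ as an element of $[0,\infty]$, this gives
\begin{equation*}
\|W_{A-\lambda}(t)\|_{\gamma}^2=\sum_{k\ge1}\lambda_k^{2\gamma}q_k\,\Big|\int_0^te^{-(\lambda+\lambda_k)(t-s)}\dd\beta_k(s)\Big|^2 .
\end{equation*}
The summands are nonnegative, hence $\sup_{0\le t\le T}\sum_k(\,\cdot\,)\le\sum_k\sup_{0\le t\le T}(\,\cdot\,)$, and bounding the $k$-th supremum by $C_\delta^2(\lambda+\lambda_k)^{-2\delta}M_k(\delta,T)^2$ through Proposition~\ref{1-dim-est} --- applied with $\lambda+\lambda_k$ in place of $\lambda$ and $\beta_k$ in place of $\beta$ --- produces the first inequality of the corollary. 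The $M_k(\delta,T)$ are independent since the $\beta_k$ are, and each obeys \eqref{est}.

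For the bound \eqref{W-est} I would first assume, as we may, that $\ve<\delta$, and then use the elementary inequality
\begin{equation*}
(\lambda+\lambda_k)^{2\delta}=(\lambda+\lambda_k)^{2\ve}(\lambda+\lambda_k)^{2(\delta-\ve)}\ge\lambda^{2\ve}\,\lambda_k^{2(\delta-\ve)},
\end{equation*}
both exponents on the right being nonnegative. Dividing, $\lambda_k^{2\gamma}q_k\,(\lambda+\lambda_k)^{-2\delta}\le\lambda^{-2\ve}\lambda_k^{-2(\delta-\gamma-\ve)}q_k$, so the first inequality of the corollary already yields \eqref{W-est} with
\begin{equation*}
M_{\delta,\gamma,\ve}\defeq C_\delta^2\sum_{k\ge1}\lambda_k^{-2(\delta-\gamma-\ve)}q_k\,M_k(\delta,T)^2,
\end{equation*}
which does not depend on $\lambda$.

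It then remains to show that $M_{\delta,\gamma,\ve}$ has finite moments of every order. Set $a_k\defeq\lambda_k^{-2(\delta-\gamma-\ve)}q_k$, so that $\sum_ka_k=Z_{\gamma,\delta,\ve}<\infty$ by \eqref{Z-est}. For any $p\ge1$, Minkowski's inequality in $L^p(\mathbb P)$ combined with \eqref{est} for $m=2p$ --- which gives $\|M_k(\delta,T)^2\|_{L^p}\le M^{1/p}T^{2(\frac12-\delta)}$ --- yields
\begin{equation*}
\big\|M_{\delta,\gamma,\ve}\big\|_{L^p}\le C_\delta^2\sum_{k\ge1}a_k\,\big\|M_k(\delta,T)^2\big\|_{L^p}\le C_\delta^2\,Z_{\gamma,\delta,\ve}\,M^{1/p}\,T^{2(\frac12-\delta)}<\infty,
\end{equation*}
and one could equally invoke Jensen's inequality with respect to the finite measure $(a_k)_k$. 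As this bound is finite, Tonelli's theorem a posteriori justifies the termwise manipulations above (interchanging the countable sum with the supremum over $t$ and with the expectation) and in particular shows that $W_{A-\lambda}(t)\in V_{\gamma}$ almost surely. The computation is otherwise mechanical; the one point demanding a little care is to make the moment bound on $M_{\delta,\gamma,\ve}$ \emph{uniform in $p$} --- a naive termwise estimate does not suffice here, and one genuinely needs Minkowski's (or Jensen's) inequality as above. I foresee no other obstacle.
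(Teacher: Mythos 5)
Your argument is correct and follows essentially the same route as the paper: diagonalise along the common eigenbasis, apply Proposition~\ref{1-dim-est} coordinatewise with $\lambda+\lambda_k$ in place of $\lambda$, extract the factor $\lambda^{-2\ve}$ via $(\lambda+\lambda_k)^{2\delta}\ge\lambda^{2\ve}\lambda_k^{2(\delta-\ve)}$, and control the moments of the resulting weighted series using \eqref{est}. The only (immaterial) deviations are that you use Minkowski's inequality in $L^p$ where the paper uses Jensen's inequality with respect to the normalised weights $\lambda_k^{-2(\delta-\gamma-\ve)}q_k/Z_{\gamma,\delta,\ve}$ --- you note yourself that the two are interchangeable --- and that you make explicit the restriction $\ve\le\delta$, which the paper leaves implicit.
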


\begin{proof}
Clearly,
\begin{equation*}
\|W_{A-\lambda}(t)\|^2_{\gamma}=\sum\limits_{k=1}^{+\infty}\lambda_k^{2\gamma}\la
W_{A-\lambda}(t),e_k\ra^2=\sum\limits_{k=1}^{+\infty}\lambda_k^{2\gamma}
\left(\int_0^te^{-(\lambda+\lambda_k)(t-s)}\sqrt{q_k}\:d\beta_{k}(s)\right)^2,
\end{equation*}
where $\beta_k$, $k\geq 1$, are independent $1$-dimensional Brownian motions. Proposition \ref{1-dim-est}
now implies that
\begin{equation*}
\begin{split}
\sup\limits_{0\leq t\leq
T}\|W_{A-\lambda}(t)\|_{{\gamma}}^2&\leq\sum\limits_{k=1}^{+\infty}\lambda_k^{2\gamma}q_k\sup\limits_{0\leq
t\leq T}|W_{-(\lambda+\lambda_k)}^{(k)}(t)|^2
\leq C_{\delta}^2
\sum\limits_{k=1}^{+\infty}\frac{\lambda_k^{2\gamma}q_k}{(\lambda+\lambda_k)^{2\delta}}\cdot
M_k(\delta, T)^2.
\end{split}
\end{equation*}
If $Z_{\gamma,\delta,\ve}<+\infty$, then
\begin{equation}
\label{est-stoch-conv}
\sup\limits_{0\leq t\leq T}\|W_{A-\lambda}(t)\|_{{\gamma}}^2\leq
\lambda^{-2\ve}\cdot M_{\delta,\gamma,\ve}
\end{equation}
with
$$
M_{\delta,\gamma,\ve}\defeq C_\delta^2\, \sum\limits_{k=1}^{+\infty}
\lambda_k^{-2(\delta-\gamma-\ve)}q_k M_k(\delta, T)^2.
$$

\medskip
\noindent
For the proof of the last statement of the corollary, take $m\geq 1$. By Jensen's inequality we can write
\begin{equation*}
M_{\delta,\gamma,\ve}^m = Z_{\gamma,\delta,\ve}^m\left(\frac{1}{Z_{\gamma,\delta,\ve}}
\sum\limits_{k=1}^{+\infty} \lambda_k^{-2(\delta-\gamma-\ve)}q_k
M_k(\delta, T)^2\right)^m
\leq Z_{\gamma,\delta,\ve}^{m-1}\sum\limits_{k=1}^{+\infty}
\lambda_k^{-2(\delta-\gamma-\ve)}q_k M_k(\delta, T)^{2m}
\end{equation*}
and using the moment estimate \eqref{est} we conclude that
\begin{equation}
\label{r-moment of M}
\begin{split}
\E \Big(M_{\delta,\gamma,\ve}\Big)^m
& \leq Z_{\gamma,\delta,\ve}^{m-1}\sum\limits_{k=1}^{+\infty}\lambda_k^{-2(\delta-\gamma-\ve)}q_k M
   \cdot T^{m(1-2\delta)}
 = M\cdot Z_{\gamma,\delta,\ve}^{m-1}\cdot T^{m(1-2\delta)}<\infty,
\end{split}
\end{equation}
where $M$ is a universal constant.
\end{proof}


\section{ A priori estimates on invariant measures}
\label{section3}

\noindent
In this section we will prove improved moment estimates on
the invariant distribution $\mu$ of a stationary mild solution of
\eqref{sde0}. The existence of a stationary mild solution is known
in many important applications that are covered by our setting,
especially for stochastic Burgers equations and thin-film growth
models (see Section \ref{SectionExamples} below). For our analysis
we need the following assumptions. Fix $0\leq \gamma_1\leq \gamma_2$
and assume

\begin{enumerate}
\item[${\bf (H_4)}$] There exists $\ve>0$ such that
\begin{equation*}
Z_{\gamma_2,\delta,\ve}\defeq \sum\limits_{k\geq
1}^{+\infty}\lambda_k^{-2(\delta-\gamma_2-\ve)}q_k < +\infty,
\end{equation*}

\item[${\bf (H_5)}$] There exist positive constants $\alpha$, $\beta$, $\gamma$, $\delta$ and $s\geq 2$
such that
$$
\la Ay+ B(y+w),y\ra \leq -\alpha
\|y\|_{\gamma_1}^{2}+\beta\|w\|_{\gamma_2}^{s}\cdot\|y\|_{\gamma_1}^2+\gamma\|w\|_{\gamma_2}^s+\delta
$$
\mbox{for all $ y\in D(A)$, $w\in V_{\gamma_2}$}.
\end{enumerate}

\noindent
For $\lambda>0$ consider the following decomposition
\begin{equation}
X(t) = Y_{\lambda}(t)+W_{A-\lambda}(t),\quad t\geq 0\, ,
\end{equation}
of the mild solution. It is then easy to see that $Y_\lambda (t)$ satisfies the following semilinear
evolution equation in the mild sense
$$
dY_{\lambda}(t) = \Big(A Y_{\lambda}(t) + \lambda W_{A-\lambda}(t)\Big)dt+B (Y_{\lambda}(t)+W_{A-\lambda}(t))dt
$$
with the random time-dependent nonlinearity $B( \cdot + W_{A-\lambda} (t))$.

\begin{lemma}
\label{lemma1} For any positive increasing $C^1$-function $\Psi$ on
$\R^+$ we have
\begin{equation}
\frac 12 \frac{d}{dt}\Psi (\|Y_{\lambda}(t)\|^2)\leq
- \frac{\alpha}{4}\Psi^{\prime}(\|Y_{\lambda}(t)\|^2)\|Y_{\lambda}(t)\|^2_{\gamma_1}
+ \Psi^{\prime}(\|Y_{\lambda}(t)\|^2)R_{\lambda}(t).
\end{equation}
Where
$$
R_{\lambda}(t) =
\delta+\gamma\|W_{A-\lambda}(t)\|_{\gamma_2}^s+\frac{\lambda^2}{2\alpha}
\|W_{A-\lambda}(t)\|_{-\gamma_1}^2\, ,\quad\lambda = \left( \frac
4\alpha (\beta M_T(\gamma_2,s)+1)\right)^{\frac 1\ve s} \, .
$$
\end{lemma}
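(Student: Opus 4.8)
The plan is to apply It\^o's formula (or rather the deterministic chain rule, since $Y_\lambda$ solves a random PDE in the mild sense with the noise already subtracted off) to the function $t\mapsto \Psi(\|Y_\lambda(t)\|^2)$. First I would establish, at least formally, the energy identity
\[
\tfrac 12 \tfrac{d}{dt}\|Y_\lambda(t)\|^2 = \la A Y_\lambda(t) + \lambda W_{A-\lambda}(t) + B(Y_\lambda(t)+W_{A-\lambda}(t)), Y_\lambda(t)\ra,
\]
which comes from testing the evolution equation for $Y_\lambda$ against $Y_\lambda(t)$ itself; a rigorous justification requires the usual regularization argument (Galerkin truncation, or mollifying in time and using that $Y_\lambda$ is more regular than $X$ because the stochastic convolution carries the rough part), and I would simply cite the standard mild-solution machinery for this. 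The point of the decomposition $X=Y_\lambda+W_{A-\lambda}$ is precisely that $Y_\lambda$ has enough regularity to make this energy computation legitimate.

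Next I would split the right-hand side. Writing $y=Y_\lambda(t)$ and $w=W_{A-\lambda}(t)$, the term $\la Ay+B(y+w),y\ra$ is bounded above using Hypothesis ${\bf (H_5)}$ by $-\alpha\|y\|_{\gamma_1}^2 + \beta\|w\|_{\gamma_2}^s\|y\|_{\gamma_1}^2 + \gamma\|w\|_{\gamma_2}^s+\delta$. The remaining cross term $\lambda\la w,y\ra$ I would control by Cauchy--Schwarz in the pairing between $V_{-\gamma_1}$ and $V_{\gamma_1}$ followed by Young's inequality, getting $\lambda\la w,y\ra \le \tfrac{\lambda^2}{2\alpha}\|w\|_{-\gamma_1}^2 + \tfrac\alpha 2\|y\|_{\gamma_1}^2$ (note $\|w\|_{-\gamma_1}\le \|w\|_{\gamma_2}$ for $\gamma_1,\gamma_2\ge0$, so this is finite under ${\bf (H_4)}$). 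Adding these, the $-\alpha\|y\|_{\gamma_1}^2$ and $+\tfrac\alpha2\|y\|_{\gamma_1}^2$ combine to $-\tfrac\alpha2\|y\|_{\gamma_1}^2$, and I then need to absorb the bad term $\beta\|w\|_{\gamma_2}^s\|y\|_{\gamma_1}^2$ into another quarter of it.

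That absorption is where the specific choice of $\lambda$ enters, and it is the one genuinely delicate point. By Corollary \ref{Cor} (with $\gamma=\gamma_2$), $\sup_{0\le t\le T}\|W_{A-\lambda}(t)\|_{\gamma_2}^2 \le \lambda^{-2\ve} M_{\delta,\gamma_2,\ve}$, hence $\sup_t\|W_{A-\lambda}(t)\|_{\gamma_2}^s \le \lambda^{-\ve s} M_T(\gamma_2,s)$ where $M_T(\gamma_2,s):=M_{\delta,\gamma_2,\ve}^{s/2}$. Therefore $\beta\sup_t\|w\|_{\gamma_2}^s \le \beta\lambda^{-\ve s}M_T(\gamma_2,s)$, and choosing $\lambda = \big(\tfrac 4\alpha(\beta M_T(\gamma_2,s)+1)\big)^{1/(\ve s)}$ makes $\lambda^{\ve s} = \tfrac4\alpha(\beta M_T(\gamma_2,s)+1) \ge \tfrac 4\alpha \beta M_T(\gamma_2,s)$, so $\beta\sup_t\|w\|_{\gamma_2}^s \le \tfrac\alpha4$. (Here $\lambda$ is random, measurable with respect to the path, which is harmless since the estimate is pathwise.) Consequently $\beta\|w\|_{\gamma_2}^s\|y\|_{\gamma_1}^2 \le \tfrac\alpha4\|y\|_{\gamma_1}^2$, and combining with the $-\tfrac\alpha2\|y\|_{\gamma_1}^2$ above leaves $-\tfrac\alpha4\|y\|_{\gamma_1}^2$. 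Collecting everything, $\tfrac12\tfrac{d}{dt}\|Y_\lambda(t)\|^2 \le -\tfrac\alpha4\|Y_\lambda(t)\|_{\gamma_1}^2 + R_\lambda(t)$ with $R_\lambda(t) = \delta + \gamma\|W_{A-\lambda}(t)\|_{\gamma_2}^s + \tfrac{\lambda^2}{2\alpha}\|W_{A-\lambda}(t)\|_{-\gamma_1}^2$ exactly as claimed.

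Finally I would multiply the scalar inequality $\tfrac12\tfrac{d}{dt}\,a(t) \le -\tfrac\alpha4 b(t) + R_\lambda(t)$ (with $a(t)=\|Y_\lambda(t)\|^2$, $b(t)=\|Y_\lambda(t)\|_{\gamma_1}^2\ge a(t)$ up to the embedding constant, though only $b\ge 0$ is used for the stated form) by $\Psi'(a(t))\ge 0$ and use the chain rule $\tfrac{d}{dt}\Psi(a(t)) = \Psi'(a(t))a'(t)$ together with monotonicity of $\Psi'$ — since $\Psi$ is increasing and $C^1$, $\Psi'\ge0$, so multiplying preserves the inequality direction — to obtain
\[
\tfrac 12\tfrac{d}{dt}\Psi(\|Y_\lambda(t)\|^2) \le -\tfrac\alpha4\Psi'(\|Y_\lambda(t)\|^2)\|Y_\lambda(t)\|_{\gamma_1}^2 + \Psi'(\|Y_\lambda(t)\|^2)R_\lambda(t),
\]
which is the assertion. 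The main obstacle is the rigorous derivation of the energy identity for $Y_\lambda$ (mild versus strong solutions); everything after that is Cauchy--Schwarz, Young, and bookkeeping the constants so the $\alpha\|y\|_{\gamma_1}^2$ budget splits as $\tfrac\alpha2 + \tfrac\alpha4 + \tfrac\alpha4$.
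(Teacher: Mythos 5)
Your proposal is correct and follows essentially the same route as the paper: the energy identity for $Y_\lambda$, the bound from ${\bf (H_5)}$, Young's inequality on the cross term $\lambda\la W_{A-\lambda},Y_\lambda\ra$ costing $\frac\alpha2\|Y_\lambda\|^2_{\gamma_1}$, and the pathwise choice of the random $\lambda$ via Corollary \ref{Cor} to absorb $\beta\|W_{A-\lambda}\|^s_{\gamma_2}\|Y_\lambda\|^2_{\gamma_1}$ into a further $\frac\alpha4\|Y_\lambda\|^2_{\gamma_1}$. The only difference is that you flag the rigorous justification of the energy identity as an issue, which the paper passes over in silence.
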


\begin{proof}
We have for all $\lambda\geq 0$
\begin{equation*}
\begin{split}
\frac 12 \frac{d}{dt} \Psi(\|Y_{\lambda}(t)\|^2)
& =\Psi^{\prime}(\|Y_{\lambda}(t)\|^2) \la AY_{\lambda}(t)+\lambda W_{A-\lambda}(t)+B(Y_{\lambda}(t)
           +W_{A-\lambda}(t)),Y_{\lambda}(t)\ra \\
& \leq \Psi^{\prime} (\|Y_{\lambda}(t)\|^2)\Big(-\alpha\|Y_{\lambda}(t)\|^2_{\gamma_1}
   +\beta\|W_{A-\lambda}(t)\|_{\gamma_2}^s \cdot\|Y_{\lambda}(t)\|^2_{\gamma_1} \\
& \qquad +\gamma \|W_{A-\lambda}(t)\|_{\gamma_2}^s+\delta+\lambda \la W_{A-\lambda}(t),Y_{\lambda}(t)\ra \Big) \\
& \leq \Psi^{\prime}(\|Y_{\lambda}(t)\|^2) \Big(-\alpha\|Y_{\lambda}(t)\|^2_{\gamma_1}
     + \beta\|W_{A-\lambda}(t)\|_{\gamma_2}^s\cdot\|Y_{\lambda}(t)\|^2_{\gamma_1}  \\
& \qquad + \gamma \|W_{A-\lambda}(t)\|_{\gamma_2}^s + \delta + \frac\alpha 2
     \|Y_{\lambda}(t)\|^2_{\gamma_1} + \frac{\lambda^2}{2\alpha}\|W_{A-\lambda}(t)\|_{-\gamma_1}^2 \Big) \\
& \leq \Psi^{\prime}(\|Y_{\lambda}(t)\|^2) \left(-\frac\alpha 2 \|Y_{\lambda}(t)\|^2_{\gamma_1}
     + \beta \|W_{A-\lambda}(t)\|_{\gamma_2}^s \cdot\|Y_{\lambda}(t)\|^2_{\gamma_1}+R_{\lambda}(t)\right)
\end{split}
\end{equation*}
with
$$
R_{\lambda}(t)=\gamma \|W_{A-\lambda}(t)\|_{\gamma_2}^s+\delta+
\frac{\lambda^2}{2\alpha}\|W_{A-\lambda}(t)\|_{-\gamma_1}^2\, .
$$
Hence by using $\bf{(H_4)}$ and Corollary \ref{Cor} we can write
$$
\|W_{A-\lambda}(t)\|_{\gamma_2}^s\leq \lambda^{-\ve
s}M_T(\gamma_2,s)
$$
with
$$
\E(M_T^m (\gamma_2,s))<\infty\quad\mbox{for all $m\geq 1$}.
$$

\noindent
Thus
\begin{equation*}
\frac 12 \frac{d}{dt}\Psi (\|Y_{\lambda}(t)\|^2)\leq
\Psi^{\prime}(\|Y_{\lambda}(t)\|^2)\left(-\frac\alpha 2
\|Y_{\lambda}(t)\|^2_{\gamma_1}+\beta\lambda^{-\ve
s}M_T(\gamma_2,s)\|Y_{\lambda}(t)\|^2_{\gamma_1}+R_{\lambda}(t)\right).
\end{equation*}

\noindent In particular for $\lambda\defeq \left( \frac 4\alpha
(\beta M_T(\gamma_2,s)+1)\right)^{\frac 1\ve s}$ we have
$$
\frac 12 \frac{d}{dt}\Psi (\|Y_{\lambda}(t)\|^2)\leq
\Psi^{\prime}(\|Y_{\lambda}(t)\|^2)\left(-\frac\alpha 4
\|Y_{\lambda}(t)\|^2_{\gamma_1}+R_{\lambda}(t)\right),
$$
which yields the proof of the lemma.
\end{proof}

\begin{proposition}
\label{lemma3}
Let $0\leq\gamma_{1} \leq\gamma_{2}$ and let $\mu$ be the distribution of any stationary
mild solution of \eqref{sde0}. Then
\[
\int \| x \|^p \mu (dx) < \infty \quad \forall p\ge 0\, .
\]
\end{proposition}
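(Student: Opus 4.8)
The plan is to run a Gronwall argument on the dissipativity estimate of Lemma~\ref{lemma1} applied to a polynomial $\Psi$, insert the moment bounds for the stochastic convolution from Corollary~\ref{Cor}, and then turn the resulting \emph{circular} inequality into a genuine a~priori estimate by a truncation trick that uses the time‑invariance of $\mu$. It suffices to bound $\int(1+\|x\|^2)^q\,\mu(dx)$ for every integer $q\ge 1$, since $\|x\|^p\le(1+\|x\|^2)^{\lceil p/2\rceil}$. Fix such a $q$ and apply Lemma~\ref{lemma1} with $\Psi(r)=(1+r)^q$. Because $\gamma_1\ge 0$ and, by ${\bf (H_0)}$, every eigenvalue $\lambda_k$ of $-A$ satisfies $\lambda_k\ge\omega$, one has $\|Y_{\lambda}(t)\|_{\gamma_1}^2\ge\omega^{2\gamma_1}\|Y_{\lambda}(t)\|^2$; combining this with the elementary bound $(1+r)^{q-1}r\ge\tfrac12(1+r)^q-2^{q-1}$ and a Young inequality applied to the term $\Psi'(\|Y_{\lambda}\|^2)R_{\lambda}(t)$, Lemma~\ref{lemma1} yields
\[
\frac{d}{dt}(1+\|Y_{\lambda}(t)\|^2)^q\ \le\ -\kappa\,(1+\|Y_{\lambda}(t)\|^2)^q\ +\ c_0\bigl(1+R_{\lambda}(t)^q\bigr),
\]
with deterministic constants $\kappa=\kappa(q)>0$ and $c_0=c_0(q)>0$.

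Integrating and using $Y_{\lambda}(0)=X_0$ (because $W_{A-\lambda}(0)=0$) gives, a.s.\ and for every $T>0$,
\[
(1+\|Y_{\lambda}(T)\|^2)^q\ \le\ e^{-\kappa T}(1+\|X_0\|^2)^q\ +\ \kappa^{-1}c_0\Bigl(1+\sup_{0\le r\le T}R_{\lambda}(r)^q\Bigr).
\]
Next I would control the noise terms: $\lambda$ is bounded below by a positive deterministic constant, so by ${\bf (H_4)}$ and Corollary~\ref{Cor} with $\gamma=\gamma_2$ the quantity $\sup_{0\le r\le T}\|W_{A-\lambda}(r)\|_{\gamma_2}$ is dominated by a random variable with finite moments of every order (the moments coming from \eqref{est} and the last part of Corollary~\ref{Cor}; and $\lambda$ itself, hence $\lambda^2$, has all moments finite since $M_T(\gamma_2,s)$ does). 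Via $\|\cdot\|_{-\gamma_1}\le\omega^{-\gamma_1-\gamma_2}\|\cdot\|_{\gamma_2}$ and $\|\cdot\|_0\le\omega^{-\gamma_2}\|\cdot\|_{\gamma_2}$ (valid since $0\le\gamma_1\le\gamma_2$ and $\lambda_k\ge\omega$), the same is then true for $\sup_{0\le r\le T}R_{\lambda}(r)$ and for $\|W_{A-\lambda}(T)\|$. Writing $X(T)=Y_{\lambda}(T)+W_{A-\lambda}(T)$ and using $(a+b)^q\le 2^{2q-1}(a^q+b^q)$, I arrive at a pathwise inequality
\[
(1+\|X(T)\|^2)^q\ \le\ c(T)\,(1+\|X_0\|^2)^q\ +\ G_T,\qquad c(T):=2^{2q-1}e^{-\kappa T},
\]
where $G_T\ge 0$ and $\E G_T<\infty$. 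Finally fix $T$ large enough that $c:=c(T)<1$.

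The step I expect to be the real obstacle is the last one, since the right‑hand side still contains $(1+\|X_0\|^2)^q$ and $\E(1+\|X_0\|^2)^q$ is precisely the quantity to be bounded — taking expectations directly only gives $I_q\le cI_q+\E G_T$, which is vacuous if $I_q=+\infty$. To circumvent this I would truncate first: for $N>0$,
\[
(1+\|X(T)\|^2)^q\wedge N\ \le\ \bigl(c\,(1+\|X_0\|^2)^q\bigr)\wedge N\ +\ G_T .
\]
Taking expectations, using the time‑invariance $\mu=\mathbb P\circ X(t)^{-1}$ — so that $\E[(1+\|X(T)\|^2)^q\wedge N]=\E[(1+\|X_0\|^2)^q\wedge N]=:a_N$ — and the identity $\E[(cZ)\wedge N]=c\,\E[Z\wedge(N/c)]$, this becomes $a_N\le c\,a_{N/c}+D$ with $D:=\E G_T<\infty$. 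Iterating $k$ times, $a_N\le c^k a_{N/c^k}+D/(1-c)$. Since $X_0\in H$ a.s., $(1+\|X_0\|^2)^q<\infty$ a.s., hence $\mathbb P((1+\|X_0\|^2)^q>t)\to 0$ and therefore $a_M/M\to 0$ as $M\to\infty$; consequently $c^k a_{N/c^k}=N\cdot a_{N/c^k}/(N/c^k)\to 0$ as $k\to\infty$, so $a_N\le D/(1-c)$ for every $N$. Letting $N\to\infty$ (monotone convergence) gives $\int(1+\|x\|^2)^q\,\mu(dx)=\lim_N a_N\le D/(1-c)<\infty$, and since $q$ (hence $p\le 2q$) was arbitrary, the proposition follows.

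Everything outside the last paragraph is routine once Lemma~\ref{lemma1} and Corollary~\ref{Cor} are available; the crucial point is to keep the stochastic‑convolution contribution as an additive term $G_T$, so that the prefactor multiplying the unknown moment of $X_0$ is a \emph{deterministic} constant which can be forced below $1$ by enlarging $T$, after which the truncation–iteration scheme, powered only by the a.s.\ finiteness of $\|X_0\|$ and the stationarity of $\mu$, closes the estimate.
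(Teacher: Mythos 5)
Your argument is correct, but it closes the a priori estimate by a genuinely different mechanism than the paper. The paper also starts from Lemma~\ref{lemma1} and Corollary~\ref{Cor}, but it truncates the Lyapunov function itself, $\Psi_K(t)=(1+t)^{p/2}\wedge K$, integrates the differential inequality over $[0,t]$, and after taking expectations uses stationarity to cancel the two (finite, because truncated) boundary terms $\E\,\Psi_K(\|X(t)\|^2)=\E\,\Psi_K(\|X(0)\|^2)$ exactly; the moment bound is then read off from the time-integrated dissipation term $\int_0^t\E\,\Phi_K(\|Y_\lambda(s)\|^2)\,ds$, followed by monotone convergence in $K$. Because converting between $X$ and $Y_\lambda$ inside $\Psi_K$ produces error terms such as $\|Y_\lambda(t)\|^{p-\frac 12}$ and $\int_0^t\|Y_\lambda(s)\|^{p-1}ds$, the paper must run an induction on $p$ in half-integer steps. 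You instead exploit the full exponential decay: combining the coercivity $\|y\|^2_{\gamma_1}\ge\omega^{2\gamma_1}\|y\|^2$ (valid since $\gamma_1\ge 0$ and $\lambda_k\ge\omega$ by ${\bf (H_0)}$) with Gronwall gives a pathwise bound $(1+\|X(T)\|^2)^q\le c\,(1+\|X_0\|^2)^q+G_T$ with a \emph{deterministic} factor $c<1$ and $\E G_T<\infty$, and the truncation--iteration $a_N\le c\,a_{N/c}+D$, powered only by $a_M=o(M)$ (i.e.\ a.s.\ finiteness of $\|X_0\|$) and stationarity, yields the bound in one stroke, with no induction on $p$ and no need for the subadditivity inequalities $(1+(s+t)^2)^{p/2}\le(1+s^2)^{p/2}+\dots$. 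The trade-offs: your route needs the strict contraction coming from $\|\cdot\|_{\gamma_1}\gtrsim\|\cdot\|_0$, whereas the paper's cancellation-plus-dissipation argument would survive with only the weaker information that the dissipation term is nonnegative and coercive in the integrated sense; on the other hand your scheme is shorter, avoids the bookkeeping of the induction, and the recursion $a_N\le c\,a_{N/c}+D$ is a clean and reusable device. Both proofs share the same formal step of treating the differential inequality of Lemma~\ref{lemma1} as holding pathwise for the mild solution, and your verification that $\E\bigl(\sup_{0\le r\le T}R_\lambda(r)^q\bigr)<\infty$ (via the lower bound $\lambda\ge(4/\alpha)^{1/\ve s}$, the moments of $M_T(\gamma_2,s)$, and H\"older for the product $\lambda^{2q}\|W_{A-\lambda}\|^{2q}_{-\gamma_1}$) is sound.
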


\begin{proof}
First note that for any $q > 0$ there exist positive constants $D_1$, $D_2$ and $D_3$ such that for
\begin{equation}
\label{Ineq4}
\begin{aligned}
\E\Big(R_{\lambda}(t)^q\Big)
& \leq D_1+D_2\E\left(\|W_{A-\lambda}(t)\|^{sq}_{\gamma_2}\right)+ D_3\E\left(M_T(\gamma_2,s)^{\frac
2\ve}\|W_{A-\lambda}(t)\|^2_{-\gamma_1}\right)^q \\
& \le D_1 + D_2\E\left(\|W_{A-\lambda}(t)\|^{sq}_{\gamma_2}\right)+ D_3\frac{s-2}{s} \E\left(M_T(\gamma_2,s)^{\frac
2\ve\frac{qs}{s-2}}\right) \\
& \qquad + D_3\frac{2}{s}\E \left( \|W_{A-\lambda}(t)\|_{-\gamma_1}^{sq} \right) \, .
\end{aligned}
\end{equation}
Since $\E\left( \|W_{A-\lambda}(t)\|^{sq}_{-\gamma_1}\right) \leq
\E\left( \|W_A(t)\|^{sq}_{\gamma_2} \right) < \infty$, inequality
\eqref{Ineq4} now implies that $\E\Big(R_{\lambda}(t)^q\Big)$ is
locally integrable w.r.t. $t$.

\medskip
\noindent
For the proof of the moment estimate let us first consider $p \in [0,1]$ and define
$\Psi (t) := \left( 1+ t\right)^{\frac p2}$. Then Lemma \ref{lemma1} implies that
\[
\frac{d}{dt} \left( 1+ \|Y_{\lambda} (t) \|^2\right)^{\frac p2}\leq
- C_{1} \|Y_\lambda (t)\|^2 \left( 1 + \|Y_{\lambda}
(t)\|^2\right)^{\frac p2}  + C_{2} R_{\lambda} (t)
\]
for finite strictly positive constants $C_1$, $C_2$. Fix $K > 0$ and define
$\Psi_{K} (t) := (1+t)^{\frac p2} \wedge K$, $\Phi_{K} (t) := 1_{\{ (1+t)^{\frac{p}{2}} \leq K \}}
t(1+ t)^{\frac p2 -1}$. Then
\[
\frac{d}{dt} \Psi_{K}(\|Y_{\lambda} (t) \|^{2} )
\leq - C_{1} \Phi_{K} ( \|Y_{\lambda} (t) \|^{2}) + C_{2} R_{\lambda} (t)
\]
again, hence
\begin{equation*}
\begin{aligned}
\Psi_{K} (\|Y_{\lambda} (t) \|^{2} ) & + C_{1} \int_{0}^{t} \Phi_{K} ( \|Y_{\lambda} (s) \|^{2})\, ds \\
& \leq \Psi_{K} (\|X (0) \|^{2}) + C_{2} \int_{0}^{t} R_{\lambda} (s)\, ds\, .
\end{aligned}
\end{equation*}

\medskip
\noindent Since for $0\leq p\leq 1$ we have  $(1+ (s+t)^2 )^{\frac
p2} \leq (1+s^2)^{\frac p2} + t^p$ for all $s,t \geq 0$, we conclude
that
\begin{equation*}
\begin{aligned}
\Psi_{K} & (\|X (t) \|^{2} ) + C_{1} \int_{0}^{t} \Phi_{K} ( \|Y_{\lambda} (s) \|^{2})\, ds \\
& \leq \left( (1 + \| Y_{\lambda}(t)\|^2)^{\frac p2} + \| W_{A - \lambda} (t) \|^p \right) \wedge K
      + C_{1} \int_{0}^{t} \Phi_{K} (\| Y_{\lambda} (s) \|^{2})\, ds\\
& \leq \left( 1+ \|Y_{\lambda} (t) \|^2 \right)^{\frac p2} \wedge K
      + C_{1} \int_{0}^{t} \Phi_{K} (\|Y_{\lambda} (s)\|^{2})\, ds
      + \|W_{A- \lambda} (t) \|^{p}\\
& \leq \Psi_{K} (\|X (0)\|^{2}) + C_{2} \int_{0}^{t} R_{\lambda} (s)\, ds + \| W_{A - \lambda} (t) \|^{p}.
\end{aligned}
\end{equation*}
Taking expectations and using stationarity of $(X(t))_{t\ge 0}$ yields the inequality
$$
C_{1} \int_{0}^{t} \E\left( \Phi_{K} (\|Y_{\lambda} (s)\|^{2}) \right) \, ds
\le C_{2} \int_{0}^{t} \E\left( R_{\lambda} (s) \right)\, ds
+ \E\left( \|W_{A- \lambda} (t) \|^{p} \right) < \infty\, .
$$
Since the right hand side does not depend on $K$, we can now take the limit $K\to\infty$ to conclude that
$$
\int_0^t \E\left( \|Y_\lambda (s)\|^2 \left( 1+ \|Y_\lambda (s)\|^2 \right)^{\frac p2 -1}\right)\, ds < \infty
$$
hence
\[
\int_{0}^{t} \E \left( \| Y_{\lambda} (s)\|^{p} \right) \, ds < \infty
\]
too, so that
\begin{equation*}
\begin{aligned}
t \int \|x\|^{p} \mu (dx) & = \int_{0}^{t} \E (\| X (s)\|^{p})\, ds\\
& \leq 2^{p} \int_{0}^{t} \E (\| Y_{\lambda} (s)\|^{p}) \, ds
   + 2^{p} \int_{0}^{t} \E (\|W_{A- \lambda} (s)\|^{p})\, ds\\
& < \infty.
\end{aligned}
\end{equation*}

\medskip
\noindent
For the general case $p > 1$ we proceed by induction. Suppose the assumption is proven for $p$ with
$2p\leq n$ and consider now $p>1$ with $2p\leq n+1$. Lemma \ref{lemma1} now implies that for finite
strictly positive constants $C_1$, $C_2$ and $C_p$
\[
\begin{aligned}
\frac{d}{dt} \left( 1+ \|Y_{\lambda} (t) \|^2 \right)^{\frac p2}
& \leq - C_{1} \|Y_\lambda (t)\|^2 \left( 1 + \|Y_{\lambda} (t)\|^2\right)^{\frac p2 -1}
+ C_{2} \left( 1 + \|Y_\lambda (t)\|^2\right)^{\frac p2 -1} R_{\lambda} (t)  \\
& \le  - C_{1} \|Y_\lambda (t)\|^2 \left( 1 + \|Y_{\lambda} (t)\|^2\right)^{\frac p2 -1}
+ C_p \left( \|Y_\lambda (t)\|^{p-1} + R_{\lambda} (t)^{p-1}  + 1\right) \, .
\end{aligned}
\]
Fix $K > 0$ and let $\Psi_K$ and $\Phi_K$ be as above, the last inequality now implies that
\begin{equation*}
\begin{aligned}
\Psi_{K} (\|Y_{\lambda} (t) \|^{2} ) & + C_{1} \int_{0}^{t} \Phi_{K} ( \|Y_{\lambda} (s) \|^{2})\, ds \\
& \leq \Psi_{K} (\|X (0) \|^{2}) + C_p \int_{0}^{t} \left( \|Y_\lambda (s)\|^{p-1} + R_\lambda (s)^{p-1}
   + 1 \right)\, ds \, .
\end{aligned}
\end{equation*}
Note that for $p>1$ there exists a finite positive constant $C_3$ such that
$$
\left( 1+ (s+t)^2\right)^{\frac p2}
\le \left( 1 + s^2 \right)^{\frac p2} + C_3 (s^{p-\frac 12} + t^{2p-1} +1)
$$
for all $s,t\ge 0$, so that the last inequality now implies that
\begin{equation*}
\begin{aligned}
\Psi_{K} (\|X (t) \|^{2} ) & + C_{1} \int_{0}^{t} \Phi_{K} ( \|Y_{\lambda} (s) \|^{2})\, ds\\
& \leq \left( 1+ \|Y_{\lambda} (t)\|^2 \right)^{\frac p2} \wedge K + C_{1} \int_{0}^{t} \Phi_{K}
  (\|Y_{\lambda} (s)\|^{2})\, ds\\
& \qquad + C_3 \left( \| Y_\lambda (t)\|^{p -\frac 12} + \|W_{A- \lambda} (t)\|^{2p-1 }+ 1\right) \\
& \leq \Psi_{K} ( \|X (0)\|^{2})
  + C_p \int_{0}^{t} \left( \|Y_\lambda (s)\|^{p-1} + R_\lambda (s)^{p-1} + 1 \right)\, ds \\
& \qquad + C_3 \left( \| Y_{\lambda} (t)\|^{p -\frac 12} + \|W_{A- \lambda} (t)\|^{2p-1} + 1\right)\, .
\end{aligned}
\end{equation*}
Taking expectations, using stationarity of $(X(t))_{t\ge 0}$ and the fact that
$$
 \E\left( \|Y_\lambda (t)\|^{p-\frac 12}\right) + \int_0^t \E\left( \|Y_\lambda (s)\|^{p-1} \right) \, ds  < \infty
$$
by assumption on $p$, we conclude that
\begin{equation*}
\begin{aligned}
C_{1} & \int_{0}^{t} \E\left( \Phi_K \left( \| Y_{\lambda} (s)\|^{2} \right)\right) \, ds
\leq C_p \int_{0}^{t} \E \left( \|Y_\lambda (s)\|^{p-1} + R_\lambda (s)^{p-1} + 1 \right)\, ds \\
& \qquad + C_3 \left( \E \left( \| Y_{\lambda} (t)\|^{p -\frac 12} \right)
   + \E \left( \|W_{A- \lambda} (t)\|^{2p-1}\right) + 1\right) < \infty \, .
\end{aligned}
\end{equation*}
Again, the right hand side does not depend on $K$, hence taking the limit $K\to\infty$ we conclude that
\[
\int_{0}^{t} \E (\|Y_\lambda (s)\|^2 (1 + \|Y_{\lambda}
(s)\|^2)^{\frac p2 -1}\, ds < \infty\, ,
\]
hence
\[
\int_{0}^{t} \E (\|Y_{\lambda} (s)\|^p )\, ds < \infty
\]
and thus $\int \|x\|^{p} \mu (dx) < \infty$ too.
\end{proof}

\noindent
Our first main result in this paper now is the following:

\begin{theorem}
\label{mainresult} Let $\gamma_1\leq \gamma_2$ and assume hypotheses
${\bf (H_0)}$-${\bf (H_5)}$ hold. Then the invariant distribution
$\mu$ of any stationary mild solution $(X(t))_{t\ge 0}$ of
\eqref{sde0} satisfies the following moment estimates:
\begin{enumerate}
\item [(i)] $\int \|x\|^{2p}\:\mu(dx)<\infty$ for $p\ge 0$.
\item [(ii)] $\int \|x\|^2_\sigma\|x\|^{2p}\:\mu(dx)<\infty$ for $p\ge 0$, $\sigma<\gamma_1$.
\end{enumerate}
\end{theorem}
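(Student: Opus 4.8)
The plan is the following. Claim (i) requires no new work: it is precisely Proposition~\ref{lemma3} with $2p$ in place of $p$. Also, the range $\sigma\le 0$ of (ii) is contained in (i), since $\lambda_k\ge\omega$ gives $\|x\|_\sigma^2\le\omega^{2\sigma}\|x\|^2$; in particular there is nothing to do when $\gamma_1=0$. So from now on fix $0<\sigma<\gamma_1$ and put $\theta\defeq\sigma/\gamma_1\in(0,1)$. Applying the spectral theorem to $-A$ one has the interpolation inequality
\[
\|x\|_\sigma^2\le\|x\|_{\gamma_1}^{2\theta}\,\|x\|^{2(1-\theta)},\qquad x\in V_{\gamma_1},
\]
and the strategy is to upgrade the coercive term in Lemma~\ref{lemma1} into a space--time bound for $\|Y_\lambda\|_{\gamma_1}^2$ weighted by arbitrary powers of $\|Y_\lambda\|$, and then feed it into this inequality.

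For the weighted bound I would repeat, essentially verbatim, the induction step in the proof of Proposition~\ref{lemma3}, but keeping $\|Y_\lambda\|_{\gamma_1}^2$ rather than replacing it by its lower bound $\omega^{2\gamma_1}\|Y_\lambda\|^2$. Concretely, for arbitrary $q\ge 0$ apply Lemma~\ref{lemma1} with $\Psi(t)=(1+t)^{q+1}$, truncate with $\Psi_K=\Psi\wedge K$, integrate on $[0,t]$, take expectations and use stationarity. The boundary term equals $\E\bigl((1+\|Y_\lambda(0)\|^2)^{q+1}\bigr)=\int(1+\|x\|^2)^{q+1}\,\mu(dx)<\infty$ by (i) (recall $Y_\lambda(0)=X(0)$), while $\int_0^t\E\bigl((1+\|Y_\lambda(s)\|^2)^q R_\lambda(s)\bigr)\,ds<\infty$ by Cauchy--Schwarz together with (i) and the local boundedness of $s\mapsto\E(R_\lambda(s)^2)$ coming from \eqref{Ineq4}. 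Letting $K\to\infty$ by monotone convergence yields
\[
\int_0^t\E\Bigl(\|Y_\lambda(s)\|_{\gamma_1}^2\,(1+\|Y_\lambda(s)\|^2)^q\Bigr)\,ds<\infty\qquad\text{for every }q\ge 0 .
\]

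To finish, use stationarity once more, $t\int\|x\|_\sigma^2\|x\|^{2p}\,\mu(dx)=\int_0^t\E\bigl(\|X(s)\|_\sigma^2\|X(s)\|^{2p}\bigr)\,ds$, and the decomposition $X=Y_\lambda+W_{A-\lambda}$, which dominates $\|X\|_\sigma^2\|X\|^{2p}$ by a constant times the four products $\|Y_\lambda\|_\sigma^2\|Y_\lambda\|^{2p}$, $\|Y_\lambda\|_\sigma^2\|W_{A-\lambda}\|^{2p}$, $\|W_{A-\lambda}\|_\sigma^2\|Y_\lambda\|^{2p}$ and $\|W_{A-\lambda}\|_\sigma^2\|W_{A-\lambda}\|^{2p}$. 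For the first, interpolation and Young's inequality give $\|Y_\lambda\|_\sigma^2\|Y_\lambda\|^{2p}\le\|Y_\lambda\|_{\gamma_1}^2(1+\|Y_\lambda\|^2)^{(p+1-\theta)/\theta}+1$, whose expectation is $s$-integrable by the previous display with $q=(p+1-\theta)/\theta$. For the second, the bound $\|Y_\lambda\|_\sigma^2\|W_{A-\lambda}\|^{2p}\le\|Y_\lambda\|_{\gamma_1}^{2\theta}\bigl(\|Y_\lambda\|^2\|W_{A-\lambda}\|^{2p/(1-\theta)}\bigr)^{1-\theta}\le\theta\|Y_\lambda\|_{\gamma_1}^2+(1-\theta)\|Y_\lambda\|^2\|W_{A-\lambda}\|^{2p/(1-\theta)}$ reduces matters to the case $q=0$ above and to H\"older's inequality combined with (i) and the (uniform on $[0,T]$) moment bounds on $W_{A-\lambda}$ from Corollary~\ref{Cor}. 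The last two products are handled directly via $\|W_{A-\lambda}(s)\|_\sigma^2\le C\|W_{A-\lambda}(s)\|_{\gamma_2}^2$ (since $\sigma<\gamma_2$), the bound $\|W_{A-\lambda}(s)\|_{\gamma_2}^2\le C'M_{\delta,\gamma_2,\ve}$ from Corollary~\ref{Cor} --- valid with a deterministic prefactor because the cut-off $\lambda$ of Lemma~\ref{lemma1} is bounded below by a deterministic positive constant --- H\"older, and (i). Summing the four contributions shows the space--time integral is finite, hence so is $\int\|x\|_\sigma^2\|x\|^{2p}\,\mu(dx)$.

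The only genuinely delicate step is the rigorous justification of the energy inequality used in the second paragraph: the chain rule for $\Psi_K(\|Y_\lambda(t)\|^2)$ (morally an It\^o or Galerkin argument) and the integrability of $(1+\|Y_\lambda(s)\|^2)^q R_\lambda(s)$ in $(s,\omega)$. Both are already dealt with in Proposition~\ref{lemma3}; everything added here is interpolation and H\"older bookkeeping, so I expect no essential new difficulty.
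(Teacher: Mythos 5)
Your argument is correct and follows essentially the same route as the paper: both proofs retain the coercive $\|Y_\lambda\|_{\gamma_1}^2$ term from the energy inequality of Lemma~\ref{lemma1}, interpolate $\|\cdot\|_\sigma$ between $\|\cdot\|_0$ and $\|\cdot\|_{\gamma_1}$, and close the estimate with Young/H\"older and stationarity of $X=Y_\lambda+W_{A-\lambda}$. Your handling of the cross terms is organized a bit differently (and more explicitly) than the paper's \eqref{B2}--\eqref{B3}, but it is not a different method.
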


\begin{proof}
Clearly, (i) follows from the previous Proposition. For the proof of
(ii) note that Lemma \ref{lemma1} implies that for $\Psi(t)=t^p$
where $p\geq 1$
\begin{equation}
\label{B1}
\begin{split}
\|Y_{\lambda}(t)\|^{2p}+\frac{\alpha}{2}p\int_0^t
\|Y_{\lambda}(s)\|^{2(p-1)}\|Y_{\lambda}(s)\|^2_{\gamma_1}\:ds &\leq
\|x\|^{2p}+2p\int_0^t
\|Y_{\lambda}(s)\|^{2(p-1)}R_{\lambda}(s)\:ds\\
&\hskip-3em\leq  \|x\|^{2p}+2(p-1)\int_0^t
\|Y_{\lambda}(s)\|^{2p}\:ds+2\int_0^t R_{\lambda}(s)^p\:ds.
\end{split}
\end{equation}
From the interpolation inequality
$$
\|x\|_{\sigma}\leq
C\|x\|_{0}^{\frac{\gamma_1-\sigma}{\gamma_1}}\|x\|_{\gamma_1}^{\frac{\sigma}{\gamma_1}}
$$
and Young's inequality, there exist positive constants $C$, $C_1$, $C_2$ such that
\begin{equation}\label{B2}
\begin{split}
\int_0^t
\|W_{A-\lambda}(s)\|^{2(p-1)}\|Y_{\lambda}(s)\|_{\sigma}^2\:ds&\leq
C\int_0^t\|W_{A-\lambda}(s)\|^{2(p-1)}\|Y_{\lambda}(s)\|_{0}^{2\frac{\gamma_1-\sigma}{\gamma_1}}
\|Y_{\lambda}(s)\|_{\gamma_1}^{2\frac{\sigma}{\gamma_1}}\:ds\\
& \hskip -6em\leq C_1\int_0^t
\|Y_{\lambda}(s)\|_{0}^{2\frac{\gamma_1-\gamma}{\sigma}}\|Y_{\lambda}(s)\|_{\gamma_1}^2\:ds+C_2
\int_0^t\|W_{A-\lambda}(s)\|^{2(p-1)\frac{\gamma_1}{\gamma_1-\sigma}}\:ds
\end{split}
\end{equation}
and
\begin{equation}\label{B3}
\int_0^t
\|Y_{\lambda}(s)\|_{0}^{2(p-1)}\|W_{A-\lambda}(s)\|_{\gamma_1}^2\:ds\leq
\frac 12 \int_0^t \|Y_{\lambda}(s)\|_{0}^{4(p-1)}\:ds
+\frac 12\int_0^t \|W_{A-\lambda}(s)\|_{\gamma_1}^4\:ds.
\end{equation}

\noindent
Putting this together with \eqref{B1} and \eqref{B2} yields
\begin{equation*}
\begin{aligned}
\int_0^t \|X(s)\|_{0}^{2(p-1)}\|X(s)\|_{\sigma}^2\:ds
& \leq C_1 \|X(0)\|^{2p}_0 + C_2 \int_0^t\|X(s)\|_0^{2p_1}\:ds + C_3 \int_0^t R_{\lambda}(s)^{p_2}\:ds
 \\
& \qquad + C_4 \int_0^t \|W_{A-\lambda}(s)\|_{\gamma_2}^{2p_3}\:ds,
\end{aligned}
\end{equation*}
for some constants $p_i$ and $C_i$.

\noindent
Taking expectations we obtain that
$$
t\int_H \|x\|^{2(p-1)}\|x\|_{\sigma}^2\:\mu(dx)=\E\left(
\int_0^t\|X(s)\|^{2(p-1)}\|X(s)\|_{\sigma}^2\:ds\right)<\infty \, .
$$
hence the assertion.
\end{proof}


\section{Maximal dissipativity of the Kolmogorov operator}

\medskip
\noindent In the previous section we discussed a priori estimates of
invariant measures $\mu$ for the equation \eqref{sde0}. Suppose for
the moment that \eqref{sde0} has a unique mild solution $X(t,x)$,
$t\ge 0$, for any initial condition $x\in H$, that $x\mapsto X(t,x)$
is measurable for any $t$ and that the stationary solution $X(t)$,
$t\ge 0$, of \eqref{sde0} can be represented as $X(t) = X(t, X_0)$,
$t\ge 0$. Furthermore we take $Q=(-A)^{2\gamma_0}$ for some
$\gamma_0< \frac{1}{2}$. It is then easy to see that in this case,
the associated transition semigroup
$$
P_t \varphi (x) = E\left( \varphi (X(t,x))\right) \, , \varphi\in\cB_b (H)\, ,
$$
induces a $C_0$-semigroup of Markovian contractions $(\tilde{P}_t )_{t\ge 0}$
on $L^1 (H, \mu )$, in fact on any $L^p (H, \mu )$ for $p\in [1, \infty [$. In the case where
$$
\langle B(x), h\rangle \, , \langle x, h\rangle \in L^1 (\mu ) \quad \mbox{ for any }\quad h\in D(A) \, ,
$$
the corresponding infinitesimal generator $L$ has the expression
$$
L\varphi (x) = \frac 12 \Tr_{H} \left( \sqrt{Q}D^2\varphi
(x)\sqrt{Q}\right) + \la x, A D\varphi (x)\ra + \la B(x), D\varphi
(x)\ra\, \qquad \varphi\in\test\, .
$$
Here,
$$
\begin{aligned}
\test & := \Big\{ \varphi\in C_b^2 (H) \mid \varphi (x) = f (\langle x,h_1\rangle ,\ldots , \langle x,h_m\rangle ),
f\in C_b^2 (\R^m) , m\ge 1,\\
& \qquad\qquad\qquad\qquad h_1, \ldots , h_m\in D(A)\Big\}
\end{aligned}
$$
denotes the space of suitable cylindrical test functions (see Proposition 3.1 in \cite{Es-St} for a proof).
As an application of the improved moment estimates on $\mu$, obtained in the last section, we shall discuss
in this section whether $(\tilde{P}_t)_{t\ge 0}$ is the only $C_0$-semigroup in $L^1 (H, \mu )$ whose infinitesimal
generator extends $(L, \test )$. In this case we say that $L$ is $L^1$-unique.

\medskip
\noindent
In the general case, the mere existence of a stationary solution of \eqref{sde0} neither
ensures the existence of the associated transition semigroup $(P_t)_{t\ge 0}$ nor the
existence of its $L^1$-counterpart $(\tilde{P}_t)_{t\ge 0}$, but only implies that the
measure $\mu$ is infinitesimally invariant for $L$, i.e.,
$$
\int_H L\varphi(x)\:\mu(dx)=0
$$
for all $\varphi\in\test$ with $L\varphi\in L^1 (H, \mu )$.

\medskip
\noindent
However, in this case, $(L,\test )$ is dissipative, in particular closable, in $L^1 (H, \mu )$ (see \cite{Es-St}).
Therefore, to obtain the existence (and also the uniqueness) of $(\tilde{P}_t)_{t\geq0}$, it is sufficient to
prove that the closure of $L$ in $L^1 (H, \mu )$ generates a $C_0$-semigroup. The $L^1$-counterpart
$(\tilde{P}_t )_{t\ge 0}$ will be Markovian and its existence can therefore be regarded as a first necessary step
in the construction of a full Markov process associated with \eqref{sde0}.

\medskip
\noindent
For our analysis in this section we need the following assumptions:
\begin{enumerate}
\item[$\bf{(A_0)}$] The measure $\mu$ is infinitesimally invariant for
$L$.
\item[$\bf{(A_1)}$] $\|B\|\in L^1 (H,\mu )$, where the vector field $B: D(B)\subset H\to H$ is considered
as a vector field on all of $H$ by setting $B(x) = 0$ if $x\in H\setminus D(B)$.
\item[$\bf{(A_2)}$] For some $\beta \in (\gamma_0,\frac 12)$, there exists $C :\:V_{\beta}\rightarrow V_{\beta}$ with $\int\|C(x)\|^{2}_{\beta}
\:\mu(dx)<+\infty$ such that
\begin{equation}
\la B(x)-B(y),x-y\ra\leq\|x-y\|_{\frac 12}^2 + \la C(x) -
C(y),x-y\ra \quad \forall x, y \in V_{\frac 12}
\end{equation}
\end{enumerate}

\medskip
\noindent
In the following, let us define finite dimensional Galerkin approximations for $L$. To this end let
$$
i_n:\:\R^n\longrightarrow H,\quad
(x_1,\ldots,x_n)\mapsto\sum\limits_{k=1}^{n}x_ke_k
$$
be the natural injection of $\R^n$ into $H$ and
$$
\pi_n:\:H\longrightarrow\R^n,\quad x\mapsto (\la x,e_1\ra,\ldots,\la
x,e_n\ra)
$$
the natural projection of $H$ on $\R^n$. Let
$$
A^n\defeq \pi_n\circ A\circ i_n:\:\R^n\rightarrow \R^n
$$
and
$$
B^n\defeq \pi_n\circ B\circ i_n:\:\R^n\rightarrow \R^n,\quad
C^n\defeq \pi_n\circ C\circ i_n:\:\R^n\rightarrow \R^n
$$
be the corresponding operator and vector-fields induced by $A$, $B$
and $C$ on $\R^n$ and consider the Kolmogorov operator
$$
L^n\varphi(x)\defeq \frac 12 \sum\limits_{k=1}^n  \la
(-A^{n})^{-2\gamma_{0}} e_{k}, e_{k} \ra \varphi
_{x_kx_k}(x)+\sum\limits_{k=1}^n\la A^n
x+B^n(x)-C^n(x),e_k\ra\varphi_{x_k}(x),\quad \varphi\in C_b^2(\R^n).
$$

\noindent
We now make the following additional assumption on $L^n$.
\vskip 1em
\begin{itemize}
\item[${\bf (A_3)}$] For $n\geq 1$, $B^n$ and $C^n$ are smooth,
polynomially bounded vector-fields.
\end{itemize}
\vskip 1em

\noindent
Note that ${\bf (A_2)}$ now implies the one-sided Lipschitz condition
\begin{equation}\label{diss-B_n}
\la \left(A^n x + B^n (x) - C^n (x)\right) -\left( A^n y + B^n(y) - C^n (y)\right),x-y\ra \leq 0 \quad x,\:y\in \R^n,
\end{equation}
for the finite-dimensional approximations of $Ax + B(x) - C(x)$.

\medskip
\noindent Next, let $U:\: H\rightarrow V_{\beta}$ be a smooth vector
field that is Lipschitz continuous w.r.t the H-norm with Lipschitz
constant $\Lip_{U}$ and denote by $L^n_U$ the Kolmogorov operator
$$
L^n_U\varphi(x) = L^n\varphi(x)+\sum\limits_{k=1}^n \la
U^n(x),e_k\ra \varphi_{x_k}(x),\quad\varphi\in C_b^2(\R^n),
$$
where $U^n=\pi_n\circ U\circ i_n:\:\R^n\rightarrow\R^n,\quad n\geq 1$. \eqref{diss-B_n} now implies the
one-sided Lipschitz condition
$$
\la \left(A^n x + B^n(x) - C^n (x)+U ^n(x)\right)-\left(A^n y + B^n(y) - C^n (x)+U^n(y)\right),x-y\ra\leq
\Lip_{U} \|x-y\|^2 \, ,
$$
$x,\:y\in \R^n$, which is equivalent with
\begin{equation}
\label{diss}
\la \left( A^n + D\left( B^n -C^n + U^n \right)\right) \xi , \xi\ra \leq \Lip_{U}\|\xi\|^2 \qquad \forall\,\xi\in\R^n\, .
\end{equation}

\medskip
\noindent
Since the coefficients of $L^n_U$ are smooth there exists for any $f\in C_b^2(\R^n)$ a solution
$C([0,+\infty)\times\R^n)\cup C^{1,2}_{loc}((0,+\infty)\times \R^n)$
of the Cauchy-problem
\begin{equation}\label{CP}
 \left\{
\begin{array}{ll}
du(t,x)= L^n_U u(t,x)dt,\quad\mbox{for}\:\: (t,x)\in (0,+\infty)\times\R^n\\
u(0,x)=f(x),\:x\in\R^n,
\end{array}
\right.
\end{equation}
satisfying $\|u\|_{\infty}\leq \|f\|_{\infty}$ (and $u\geq 0$ if $f\geq 0$). In addition, there exists
a semigroup of linear operators $(T_t^{U^n})_{t\geq 0}$ on $C_b(\R^n)$ such that for $f\in C_b(\R^n)$
the solution of \eqref{CP} is represented as
$$
u(t,x)=T_t^{U^n}f(x),\quad t\geq 0,\:x\in\R^n
$$
(see Theorem 2.2.5 in \cite{BL}). According to Theorem 6.1.7 in \cite{BL} we also have the norm-estimates
\begin{equation}
\label{uniform-estimate}
\|T_t^{U^n}f\|_{C_b^1(\R^n)}\leq C \|f\|_{C_b^1(\R^n)},\quad
f\in C_b^1(\R^n)
\end{equation}
for some uniform constant $C>0$. A simple coupling argument shows that the constant in \eqref{uniform-estimate}
may be chosen to be $e^{\Lip_{U}t}$, taking into account \eqref{diss}. Note that this constant is independent of
$n$, $n\geq 1$.

\medskip
\noindent
In the following we will use the notation $``\bar{\varphi}``$ for $\varphi\in\mathcal{B}(\R^n)$
to denote the function $\bar{\varphi}=\varphi\circ \pi_n$. Then
$$
\la D\bar{\varphi}(x), e_k \ra =
\begin{cases}
\varphi_{x_k}(x) & \mbox{if}\:\:k=1,\ldots, n\\
0 &  \mbox{otherwise}
\end{cases}
$$
and $\bar{\varphi}\in \test$ if $\varphi\in C_b^2(\R^n)$. In particular
$\overline{T_t^{U^n}f}\in \test$ for $t\geq 0$, $f\in C_b^2(\R^n)$. We will also use
the notation
$$
\|x\|_\alpha := \|i_n x\|_\alpha \, , x\in\R^n\, , \alpha\in\R\, .
$$

\medskip
\noindent
The following a priori estimate is crucial.

\begin{lemma}
\label{lemma4_1}
Let $f\in C_b^2(\R^{n_0}) \mbox{ and }\lambda > 0$. Then for $n\geq n_0$ we have
\begin{equation}\label{l2-gradientestimate}
\begin{split}
\int_0^t e^{-\lambda s}\int_H & \|D
\overline{T^{U^n}_s f}\|_{-\gamma_0}^2\:d\mu\:ds
\leq 4 \frac{e^{\Lip_{U}t}}{\lambda} \int_H\|B-B^n\|\:d\mu\cdot\|f\|^2_{C_b^1(\R^n)} \\
& \qquad + 2\|f\|^2_{\infty}+\frac {4}{\lambda}
\int_H\|C^n-U^n\|^2_{\gamma_0}\:d\mu\|f\|_{\infty}^2.
\end{split}
\end{equation}
\end{lemma}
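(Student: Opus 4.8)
The plan is to run a carré-du-champ (energy) computation for the finite-dimensional semigroup $T_s^{U^n}$, to test the resulting identity against the infinitesimally invariant measure $\mu$ using ${\bf (A_0)}$, and to absorb the first-order contributions by means of the $n$-uniform gradient bound $\|T_s^{U^n}g\|_{C_b^1(\R^n)}\le e^{\Lip_U s}\|g\|_{C_b^1(\R^n)}$ that follows from \eqref{uniform-estimate} and the coupling argument. Fix $n\ge n_0$, regard $f$ as an element of $C_b^2(\R^n)$ that is constant in the extra coordinates (so $\|f\|_{C_b^1(\R^n)}=\|f\|_{C_b^1(\R^{n_0})}$), and put $u(s,\cdot)\defeq T_s^{U^n}f$ and $v_s\defeq\overline{u(s,\cdot)}=u(s,\cdot)\circ\pi_n\in\test$.

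First I would record the two algebraic identities on which the argument rests. Since the second-order coefficient of $L^n_U$ is $(-A^n)^{-2\gamma_0}$ (diagonal in $e_1,\dots,e_n$), its square-field operator satisfies $\overline{\Gamma^n(g,g)}=\|D\bar g\|_{-\gamma_0}^2$; and because $u(s,\cdot)^2\in C_b^2(\R^n)$,
\[
\tfrac{d}{ds}\,u(s,\cdot)^2=2u(s,\cdot)L^n_Uu(s,\cdot)=L^n_U\big(u(s,\cdot)^2\big)-\Gamma^n\big(u(s,\cdot),u(s,\cdot)\big).
\]
Secondly, comparing coefficients — the trace terms agree by ${\bf (H_2)}$ and the definition of $L^n$, the $\la x,AD\bar g\ra$-terms agree because $A^n=\pi_nAi_n$, and $D\bar g(x)\in i_n(\R^n)$ kills the remaining components — gives, for $g\in C_b^2(\R^n)$,
\[
\overline{L^n_Ug}=L\bar g-\big\la B-\overline{B^n},D\bar g\big\ra-\big\la\overline{C^n}-\overline{U^n},D\bar g\big\ra,
\]
with $\overline{B^n}\defeq i_n\circ B^n\circ\pi_n$, and likewise for $\overline{C^n},\overline{U^n}$. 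Applying the first identity with $g=u(s,\cdot)$, using $Dv_s^2=2v_sDv_s$, and then $\int_HLv_s^2\,d\mu=0$ from ${\bf (A_0)}$ — legitimate since $Dv_s^2$ is bounded, $\|B\|\in L^1(\mu)$ by ${\bf (A_1)}$ and $\|x\|\in L^1(\mu)$ by the moment estimates of Section \ref{section3}, so $Lv_s^2\in L^1(\mu)$ — integration over $H$ yields
\[
\tfrac{d}{ds}\int_H v_s^2\,d\mu=-2\int_H v_s\la B-\overline{B^n},Dv_s\ra d\mu-2\int_H v_s\la\overline{C^n}-\overline{U^n},Dv_s\ra d\mu-\int_H\|Dv_s\|_{-\gamma_0}^2 d\mu.
\]

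I would then multiply by $e^{-\lambda s}$, integrate over $s\in[0,t]$, and integrate by parts in $s$ on the left, which produces $e^{-\lambda t}\int_H v_t^2 d\mu-\int_H(f\circ\pi_n)^2 d\mu+\lambda\int_0^t e^{-\lambda s}\int_H v_s^2 d\mu\,ds$; the first and third summands are nonnegative and are discarded, the second is $\le\|f\|_\infty^2$. For the $B-\overline{B^n}$ term I use $|\la B-\overline{B^n},Dv_s\ra|\le\|B-\overline{B^n}\|\,\|Dv_s\|$ together with $\|v_s\|_\infty\le\|f\|_\infty$, $\|Dv_s\|_\infty\le e^{\Lip_U s}\|f\|_{C_b^1(\R^n)}\le e^{\Lip_U t}\|f\|_{C_b^1(\R^n)}$ and $\int_0^te^{-\lambda s}ds\le 1/\lambda$; this produces the first term on the right of \eqref{l2-gradientestimate}. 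For the $\overline{C^n}-\overline{U^n}$ term I use that $\beta>\gamma_0$ forces $\overline{C^n}-\overline{U^n}\in V_{\gamma_0}$, whence $|\la\overline{C^n}-\overline{U^n},Dv_s\ra|\le\|\overline{C^n}-\overline{U^n}\|_{\gamma_0}\|Dv_s\|_{-\gamma_0}$, and Young's inequality $2ab\le 2a^2+\tfrac12 b^2$ (with $a=|v_s|\,\|\overline{C^n}-\overline{U^n}\|_{\gamma_0}$, $b=\|Dv_s\|_{-\gamma_0}$) produces a term controlled by $\|f\|_\infty^2$ and $\int_H\|C^n-U^n\|_{\gamma_0}^2 d\mu$ plus a half-multiple of $\int_0^t e^{-\lambda s}\int_H\|Dv_s\|_{-\gamma_0}^2 d\mu\,ds$, which I reabsorb into the left-hand side. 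Collecting the surviving terms and multiplying through by $2$ reproduces \eqref{l2-gradientestimate} with the constants $4e^{\Lip_U t}/\lambda$, $2$ and $4/\lambda$ exactly as stated.

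The step needing the most care is the rigorous justification of the two operations in $s$: by \eqref{CP} and ${\bf (A_3)}$, $u(\cdot,x)$ is a classical solution only on $(0,+\infty)$ with the drift of $L^n_U$ merely polynomially bounded, so the differentiation under $\int_H\cdot\,d\mu$ and the integration by parts should first be performed on $[\varepsilon,t]$ — using $\|u(s,\cdot)\|_\infty\le\|f\|_\infty$, the polynomial bounds combined with the moments from Section \ref{section3}, and continuity $u(s,\cdot)\to f$ as $s\downarrow0$ — and then one passes to the limit $\varepsilon\downarrow0$. The decisive feature is that the gradient factor $e^{\Lip_U s}$ in \eqref{uniform-estimate} is independent of $n$ (this is precisely what the one-sided Lipschitz bound \eqref{diss} and the coupling argument provide), so that \eqref{l2-gradientestimate} is uniform in $n\ge n_0$.
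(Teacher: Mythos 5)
Your proposal is correct and follows essentially the same route as the paper: the carr\'e-du-champ identity combined with infinitesimal invariance of $\mu$, the decomposition $L\bar g = \overline{L^n_U g} + \la B-B^n+C^n-U^n, D\bar g\ra$, the $n$-uniform gradient bound $e^{\Lip_U s}$, Cauchy--Schwarz in $V_{\gamma_0}\times V_{-\gamma_0}$ with Young's inequality to absorb the $C^n-U^n$ contribution, and multiplication by $e^{-\lambda s}$ followed by integration in $s$. The only differences are cosmetic (you absorb the gradient term after integrating in time rather than pointwise in $s$, and you spell out the $\varepsilon$-regularization near $s=0$, which the paper leaves implicit), and your constants match \eqref{l2-gradientestimate} exactly.
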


\begin{proof}
Clearly, invariance of $\mu$ implies for $\varphi\in\test$ that
\begin{equation*}
\begin{split}
\frac 12\int_H & \|D \varphi \|_{-\gamma_0}^2\:d\mu
   = -\int_H L\varphi\, \varphi\, d\mu \\
& = -\int_H L^n_U \varphi\, \varphi\, d\mu
   -\int_H \la B-B^n+C^n-U^n, D \varphi\ra\varphi\:d\mu \\
& \leq -\int_H L^n_U\varphi\, \varphi\,d\mu
  + \| D \varphi\|_{\infty} \|\varphi\|_{\infty}\int_H\|B-B^n\|\:d\mu \\
& \quad +\left(\int_H\|C^n-U^n\|^2_{\gamma_0}\:d\mu\right)^{\frac 12}\,
\left(\int_H\| D \varphi\|^2_{-\gamma_0}\:d\mu \right)^{\frac 12}\cdot \|\varphi\|_{\infty}
\end{split}
\end{equation*}
and thus
\begin{equation}\label{gradient-est}
\begin{split}
\int_H \| D \varphi\|_{-\gamma_0}^2\:d\mu
& \leq -4\int_H L^n_U\varphi \, \varphi\, d\mu+4\| D \varphi\|_{\infty}
   \|\varphi\|_{\infty}\int_H\|B-B^n\|\:d\mu \\
& \qquad + 4\|\varphi\|^2_{\infty}\int_H\|C^n-U^n\|^2_{\gamma_0}\:d\mu\, .
\end{split}
\end{equation}

\noindent
Inserting $\overline{T_s^{U^n}f}$ in \eqref{gradient-est}, using
$\| D \overline{T_s^{U^n}f}\|_{\infty} = \| D T^{U^n}_sf\|_{\infty}
\leq e^{\Lip_{U}s}\|f\|_{C_b^1(\R^{n_0})}$ and $L^n_U T_s^{U^n}f
=\frac{d}{ds}T_s^{U^n}f$, we obtain that
\begin{equation}
\label{gradient-est-semigroup}
\begin{aligned}
\int_H \| D\overline{T_s^{U^n}f}\|_{-\gamma_0}^2\:d\mu
& \leq 4\int_H\|B-B^n\|\:d\mu\cdot e^{\Lip_{U}s} \|f\|^2_{C_b^1(\R^{n_0})}\\
& - 2\int_H  \frac{d}{ds}(\overline{T_s^{U^n}f})^2\: d\mu
  + 4\|f\|^2_{\infty}\int_H\|C^n-U^n\|^2_{\gamma_0}\:d\mu \, .
\end{aligned}
\end{equation}
Multiplying both sides of the above inequality by $e^{-\lambda s}$ and using
$$
\frac{d}{ds}\left(e^{-\lambda s}(\overline{T_s^{U^n}f})^2\right)
\leq e^{-\lambda s}\frac{d}{ds}(\overline{T_s^{U^n}f})^2
$$
we conclude for $s\le t$ that
\begin{equation*}
\begin{aligned}
e^{-\lambda s}\int_H \|D\overline{T_s^{U^n}f}\|_{-\gamma_0}^2\:d\mu
&  \leq - 2\int_H  \frac{d}{ds}\left(e^{-\lambda s}(\overline{T_s^{U^n}f})^2\right)\: d\mu \\
& \quad + 4 e^{-\lambda s} \int_H\|B-B^n\|\:d\mu\cdot e^{\Lip_{U}t} \|f\|^2_{C_b^1(\R^{n_0})}\\
& \quad + 4 e^{-\lambda s} \|f\|^2_{\infty}\int_H\|C^n-U^n\|^2_{\gamma_0}\:d\mu\, .
\end{aligned}
\end{equation*}
Integrating the last inequality with respect to $ds$ yields inequality \eqref{l2-gradientestimate}.
\end{proof}

\begin{lemma}
Let $\lambda >0$ and  $h\in\mathcal{B}_b (H)$ be such that
$$
\int_H (\lambda-L)\varphi \, h\:d\mu = 0 \qquad \mbox{for all } \varphi\in \test\, ,
\varphi=f\circ\pi_{n_0}\, , f\in C_b^2(\R^{n_0})\, .
$$
Then for $n\ge n_0$
\begin{equation}
\label{EstimateLemma4_2}
\begin{aligned}
 \left| \int_H \varphi \, h\,  d\mu  \right|
&  \le e^{-\lambda t} \|\varphi\|_\infty \|h\|_\infty
 + \|f\|_{C_b^1 (\Bbb R^{n_0})} \|h\|_\infty \frac{e^{\Lip_U t}}{\lambda}
   \int_H \|B - B^n \|\, d\mu  \\
& \qquad + \frac{\|h\|_\infty}{\sqrt{\lambda}} \left( \int_0^t e^{-\lambda s} \int_H
\|D\overline{T_s^{U^n} f} \|_{-\gamma_0}^2\, d\mu \, ds \right)^{\frac 12}
\left( \int_H \| C^n - U^n\|^2_{\gamma_0}\, d\mu  \right)^{\frac 12}\, .
\end{aligned}
\end{equation}
\end{lemma}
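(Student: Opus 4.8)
The plan is to start from the abstract Cauchy-problem identity relating $h$ to the Galerkin semigroup $T^{U^n}_s$, and then use the mixing/stationarity encoded in $(A_0)$ to propagate the test function. Concretely, since $\overline{T^{U^n}_s f}\in\test$ for every $s\ge 0$, the hypothesis on $h$ applies to $\varphi_s:=\overline{T^{U^n}_s f}$, giving $\int_H(\lambda-L)\varphi_s\,h\,d\mu=0$. Writing $L\varphi_s = L^n_U\varphi_s + \la B-B^n+C^n-U^n, D\varphi_s\ra$ and using $L^n_U T^{U^n}_s f=\frac{d}{ds}T^{U^n}_s f$ (valid by the regularity of the solution of \eqref{CP}), one obtains
\[
\int_H\Bigl(\lambda \overline{T^{U^n}_s f} - \tfrac{d}{ds}\overline{T^{U^n}_s f}\Bigr) h\,d\mu
= \int_H \la B-B^n+C^n-U^n, D\overline{T^{U^n}_s f}\ra\, h\,d\mu\, .
\]
First I would rewrite the left-hand side as $-e^{\lambda s}\frac{d}{ds}\bigl(e^{-\lambda s}\int_H \overline{T^{U^n}_s f}\,h\,d\mu\bigr)$, so that after multiplying by $e^{-\lambda s}$ and integrating from $0$ to $t$ the boundary terms telescope: the $s=0$ term is $\int_H f\circ\pi_{n_0}\,h\,d\mu=\int_H\varphi\,h\,d\mu$ (since $T^{U^n}_0 f=f$ and $n\ge n_0$), and the $s=t$ term is bounded by $e^{-\lambda t}\|\varphi\|_\infty\|h\|_\infty$ using the contraction property $\|T^{U^n}_t f\|_\infty\le\|f\|_\infty$.

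The remaining work is to estimate the right-hand integral $\int_0^t e^{-\lambda s}\int_H \la B-B^n+C^n-U^n, D\overline{T^{U^n}_s f}\ra\,h\,d\mu\,ds$. I would split it into the $B-B^n$ piece and the $C^n-U^n$ piece. For the first, bound $|\la B-B^n, D\overline{T^{U^n}_s f}\ra|\le \|B-B^n\|\cdot\|D\overline{T^{U^n}_s f}\|_\infty$ and use the uniform gradient bound $\|D T^{U^n}_s f\|_\infty\le e^{\Lip_U s}\|f\|_{C_b^1(\R^{n_0})}$ from \eqref{uniform-estimate} together with the coupling refinement of the constant; the $e^{-\lambda s}e^{\Lip_U s}$ factor integrates (even crudely bounding $e^{-\lambda s}\le 1$ against the $e^{\Lip_U s}$, or better keeping the product) to produce the factor $\frac{e^{\Lip_U t}}{\lambda}\int_H\|B-B^n\|\,d\mu$ times $\|f\|_{C_b^1}\|h\|_\infty$. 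For the second piece, the natural move is to pair $\|C^n-U^n\|_{\gamma_0}$ against $\|D\overline{T^{U^n}_s f}\|_{-\gamma_0}$ via Cauchy–Schwarz in $H$ (this is exactly the duality between $V_{\gamma_0}$ and $V_{-\gamma_0}$ with $Q=(-A)^{2\gamma_0}$), then apply Cauchy–Schwarz in $L^2(\mu)$ and again in $L^2(ds)$ with the weight $e^{-\lambda s}$: this gives
\[
\Bigl(\int_0^t e^{-\lambda s}\int_H\|D\overline{T^{U^n}_s f}\|_{-\gamma_0}^2\,d\mu\,ds\Bigr)^{1/2}
\Bigl(\int_0^t e^{-\lambda s}\,ds\Bigr)^{1/2}\Bigl(\int_H\|C^n-U^n\|_{\gamma_0}^2\,d\mu\Bigr)^{1/2}\|h\|_\infty,
\]
and $\int_0^t e^{-\lambda s}\,ds\le 1/\lambda$ supplies the $1/\sqrt\lambda$ prefactor, matching the stated bound.

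The main obstacle — and the only place that needs care rather than bookkeeping — is justifying the differentiation-under-the-integral and the identity $L^n_U T^{U^n}_s f=\frac{d}{ds}T^{U^n}_s f$ when tested against $h\,d\mu$: one has to know that $\overline{T^{U^n}_s f}$ and its time derivative lie in $\test$ (or at least in the domain where the infinitesimal-invariance identity $(A_0)$ is applicable) uniformly enough in $s$ to integrate. Here I would invoke the $C^{1,2}_{loc}$-regularity of the solution of \eqref{CP} together with the global bounds $\|T^{U^n}_s f\|_\infty\le\|f\|_\infty$, $\|DT^{U^n}_s f\|_\infty\le e^{\Lip_U s}\|f\|_{C_b^1}$, and the polynomial growth of the coefficients from $(A_3)$, combined with the finite moments of $\mu$ from Theorem \ref{mainresult}, to dominate $L^n_U\overline{T^{U^n}_s f}$ in $L^1(\mu)$ and thereby legitimize the manipulation. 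Everything else is Cauchy–Schwarz and the elementary estimate $\frac{d}{ds}(e^{-\lambda s}g^2)\le e^{-\lambda s}\frac{d}{ds}g^2$ for the $s=t$ boundary control, exactly as in the proof of Lemma \ref{lemma4_1}.
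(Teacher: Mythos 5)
Your proposal follows essentially the same route as the paper: test the hypothesis on $\varphi_s=\overline{T_s^{U^n}f}$, convert $(\lambda-L^n_U)\varphi_s$ into $\frac{d}{ds}$ of $e^{-\lambda s}\int\overline{T_s^{U^n}f}\,h\,d\mu$ plus the perturbation $\langle B^n-B-C^n+U^n,D\varphi_s\rangle$, integrate in $s$, bound the $B-B^n$ piece by the uniform gradient estimate and the $C^n-U^n$ piece by the $V_{\gamma_0}$/$V_{-\gamma_0}$ duality followed by Cauchy--Schwarz in $\mu$ and in $ds$ with weight $e^{-\lambda s}$. This matches the paper's proof step for step, so the proposal is correct and not a genuinely different argument.
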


\begin{proof}
Since
$$
\frac{d}{ds}T_s^{U^n}f=L^n_U T_s^{U^n}f,\quad
s>0,
$$
it follows for $s\le t$ that
\begin{equation*}
\begin{aligned}
\frac{d}{ds}\:e^{-\lambda s} & \int_H \overline{T_s^{U^n}f}\, h\:d\mu
= e^{-\lambda s}\int_H (L^n_U -\lambda)\overline{T_s^{U^n}f}\, h\:d\mu\\
&\hskip-2em=e^{-\lambda s}\int_H\la
B^n-B-C^n+U^n, D \overline{T_s^{U^n}f}\ra h\:d\mu \\
& \hskip-2em\leq e^{-\lambda s} e^{\Lip_{U}t}\|f\|_{C_b^1(\R^{n_0})}\cdot\|h\|_{\infty}\int_H \|B^n -B\|\:d\mu \\
&  + e^{-\lambda s} \|h\|_\infty \left( \int_H \|C^n - U^n
\|^2_{\gamma_0} \, d\mu \right)^{\frac 12} \left(
\int_H\|D\overline{T_s^{U^n}f} \|_{-\gamma_0}^2\, d\mu
\right)^{\frac 12} \, .
\end{aligned}
\end{equation*}
Integrating the last inequality with respect to $s$ and applying H\"older's inequality to the
second term yields the assertion \eqref{EstimateLemma4_2}.
\end{proof}

\medskip
\noindent
We are now ready to prove the main result

\begin{proposition}
\label{Prop4_3} Let $\lambda>0$ and suppose $h\in\mathcal{B}_b(H)$
is such that
$$
\int_H (\lambda-L)\varphi\, h\:d\mu = 0\quad\mbox{ for all }\quad\varphi\in\test\, .
$$
Then $h=0\quad\mu$-a.e.
\end{proposition}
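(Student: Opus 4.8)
The plan is to combine Lemma~\ref{lemma4_1} and the estimate \eqref{EstimateLemma4_2} into a single master inequality for $\bigl|\int_H\varphi\,h\,d\mu\bigr|$ and then let the auxiliary parameters degenerate. First I would reduce to the case $\varphi=\bar f$ with $f\in C_b^2(\R^{n_0})$, $n_0\ge1$: since $(e_k)_{k\ge1}$ is an orthonormal basis and each $e_k\in D(A)$, every such $\bar f$ lies in $\test$, these functions are dense in $L^1(H,\mu)$, and for $\varphi=\bar f$ the hypothesis needed in \eqref{EstimateLemma4_2} is exactly what is assumed on $h$. So it suffices to prove $\int_H\bar f\,h\,d\mu=0$ for all such $f$. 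Fix $f$, an integer $n\ge n_0$, a time $t>0$, and a smooth vector field $U\colon H\to V_\beta$ that is Lipschitz in the $H$-norm. Substituting the bound \eqref{l2-gradientestimate} into the last factor of \eqref{EstimateLemma4_2} and using $\sqrt{a+b+c}\le\sqrt a+\sqrt b+\sqrt c$, one arrives at an inequality of the form
\begin{equation*}
\begin{aligned}
\Bigl|\int_H\bar f\,h\,d\mu\Bigr|
&\le e^{-\lambda t}\|f\|_\infty\|h\|_\infty + a_n(t) \\
&\quad + c(f,h,\lambda)\,\Bigl(\int_H\|C^n-U^n\|_{\gamma_0}^2\,d\mu\Bigr)^{1/2}
   \Bigl(1+\tfrac{1}{\lambda}\int_H\|C^n-U^n\|_{\gamma_0}^2\,d\mu\Bigr)^{1/2},
\end{aligned}
\end{equation*}
where $a_n(t)$ collects all summands carrying a factor $\int_H\|B-B^n\|\,d\mu$ (each multiplied by at most $e^{\Lip_U t}/\lambda$ and fixed powers of $\|f\|_{C_b^1(\R^{n_0})}$ and $\|h\|_\infty$), and $c(f,h,\lambda)$ is independent of $n$, $t$ and $U$.

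Second, I would fix the parameters in the right order. Given $\varepsilon>0$: by $\mathbf{(A_2)}$ the field $C$ lies in $L^2(\mu;V_\beta)$ with $\beta>\gamma_0$, hence also in $L^2(\mu;V_{\gamma_0})$ since $(-A)^{\gamma_0-\beta}$ is bounded, so one can choose $U$ smooth, bounded, $H$-Lipschitz and $V_\beta$-valued with $\int_H\|C-U\|_{\gamma_0}^2\,d\mu<\varepsilon$ (truncate $C$ and mollify in finitely many coordinate directions); this also fixes $\Lip_U$. Next pick $t$ so large that $e^{-\lambda t}\|f\|_\infty\|h\|_\infty<\varepsilon$. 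Finally let $n\to\infty$: the Galerkin coefficients converge, namely $\int_H\|B-B^n\|\,d\mu\to0$ and $\int_H\|C^n-U^n\|_{\gamma_0}^2\,d\mu\to\int_H\|C-U\|_{\gamma_0}^2\,d\mu$, so $a_n(t)\to0$ and the last term tends to at most $c(f,h,\lambda)\sqrt\varepsilon\,(1+\varepsilon/\lambda)^{1/2}$. Hence $\bigl|\int_H\bar f\,h\,d\mu\bigr|\le\varepsilon+c(f,h,\lambda)\sqrt\varepsilon\,(1+\varepsilon/\lambda)^{1/2}$, and letting $\varepsilon\downarrow0$ gives $\int_H\bar f\,h\,d\mu=0$.

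Third, since this holds for every $f\in C_b^2(\R^{n_0})$ and every $n_0\ge1$, and such functions are dense in $L^1(H,\mu)$ while $h\in\mathcal{B}_b(H)\subset L^\infty(H,\mu)=L^1(H,\mu)^*$, the functional $h$ annihilates a dense subspace of $L^1(H,\mu)$, so $h=0$ in $L^\infty(H,\mu)$, i.e.\ $h=0$ $\mu$-a.e., which is the assertion.

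The analytic substance is already contained in the estimates \eqref{l2-gradientestimate} and \eqref{EstimateLemma4_2}; the genuinely delicate part of the plan is the passage to the limit in the Galerkin coefficients, i.e.\ proving $\int_H\|B-B^n\|\,d\mu\to0$ and $\int_H\|C^n-U^n\|_{\gamma_0}^2\,d\mu\to\int_H\|C-U\|_{\gamma_0}^2\,d\mu$ as $n\to\infty$. This is where one must combine $\mathbf{(A_1)}$, $\mathbf{(A_2)}$, the a priori moment bounds of Theorem~\ref{mainresult} (to secure uniform integrability of the truncated drifts $\|B(i_n\pi_n\,\cdot)\|$ and $\|C(i_n\pi_n\,\cdot)\|_{\gamma_0}$), the continuity of $B$ and $C$ available in the concrete examples, and the strong convergence $i_n\pi_n\to\Id$, in order to apply dominated convergence. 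Verifying that $U$ can simultaneously be chosen smooth, $H$-Lipschitz, $V_\beta$-valued and $L^2(\mu;V_{\gamma_0})$-close to $C$ is a second, milder technical point, handled through the continuous embedding $V_\beta\hookrightarrow V_{\gamma_0}$.
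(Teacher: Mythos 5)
Your argument is correct and follows essentially the same route as the paper's: it combines Lemma \ref{lemma4_1} with the estimate \eqref{EstimateLemma4_2}, chooses $U$ Lipschitz and $L^2(\mu;V_\beta)$-close to $C$, takes $t$ large and then $n\to\infty$, with the same ordering of parameters; the only difference is that you argue directly rather than by contradiction, and you explicitly flag the Galerkin convergences $\int_H\|B-B^n\|\,d\mu\to 0$, $\int_H\|C^n-U^n\|_{\gamma_0}^2\,d\mu\to\int_H\|C-U\|_{\gamma_0}^2\,d\mu$ and the construction of $U$, which the paper likewise asserts without proof.
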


\begin{proof}
Suppose on the contrary that $h\neq 0$. Then there exists $\varphi=f\circ\pi_n$ for some $f\in C_b^2(\R^n)$ with
$$
\ve\defeq \left|\int_H \varphi \, h\:d\mu \right|>0.
$$
We may suppose that $\ve\le 1$.

\medskip
\noindent Let $U: H\rightarrow V_{\beta}$ be such that $U$ is
Lipschitz w.r.t. the $H$-norm and
$$
\left(\int_H \|C-U \|_{\beta}^2 \:d\mu \right)^{\frac 12}
\leq\frac{\ve}{8\left( 1 + \frac{\|h\|_\infty}{\sqrt{\lambda}}
\left( 2 \|f\|^2_\infty + 1\right)^\frac 12 + \frac 4\lambda
\|f\|^2_\infty \right)}\, .
$$
Since
$$
\lim\limits_{n\to+\infty}\int_H \|U-U^n\|_{\beta}^2\:d\mu +\int_H
\|C-C^n\|_{\beta}^2\:d\mu =0\, ,
$$
and using the fact that $\gamma_0<\beta$ we can find
$n_\varepsilon\ge n_0$ such that
$$
\begin{aligned}
\sup_{n\ge n_\varepsilon} &
\left( \int_H \|U-U^n\|_{\gamma_0}^2\:d\mu \right)^{\frac 12}
+ \left(\int_H \|C-C^n\|_{\gamma_0}^2\:d\mu\right)^{\frac 12} \\
& \le \frac{\ve}{4 \left( \frac{\|h\|_\infty}{\sqrt{\lambda}} \left( 2
\|f\|^2_\infty + 1\right)^{\frac 12} + \frac 4\lambda \|f\|^2_\infty\right) }\, .
\end{aligned}
$$
In particular,
\begin{equation}
\label{EstimateProp4_3_2} \sup_{n\ge n_\varepsilon} \left( \int_H
\|C^n -U^n\|_{\gamma_0}^2\: d\mu \right)^{\frac 12} \le
\frac{\ve}{2\left( \frac{\|h\|_\infty}{\sqrt{\lambda}} \left( 2
\|f\|^2_\infty + 1\right)^\frac 12 + \frac 4\lambda \|f\|^2_\infty \right)}\, .
\end{equation}

\medskip
\noindent
Let $t_\ve > 0$ be such that $e^{-\lambda t_\ve} \|\varphi\|_{\infty} \|h\|_\infty < \frac {\ve}4$. Since
$\lim_{n\to\infty} \int_H \|B - B^n \| \, d\mu = 0$, we can find by Lemma \ref{lemma4_1} and \eqref{EstimateProp4_3_2}
$\tilde{n}_\ve\ge n_\ve$ such that
\begin{equation}
\label{EstimateProp4_3_1}
\sup_{n\ge \tilde{n}_\ve} \int_0^{t_\ve} e^{-\lambda s} \int_H \left\| D\overline{T_s^{U^n}f}(x)
\right\|^2_{-\gamma_0} \, \mu(dx)\, ds \le 2 \|f\|^2_\infty + 1\, .
\end{equation}
Inserting \eqref{EstimateProp4_3_1} into \eqref{EstimateLemma4_2} we obtain for $n\ge \tilde{n}_\ve$ the estimate
$$
\begin{aligned}
\left| \int_H \varphi \,  h\, d\mu  \right| & \le e^{-\lambda t_\ve} \|\varphi\|_\infty \|h\|_\infty \\
& \quad + \|f\|_{C_b^1 (\Bbb R^{n_0})} \|h\|_\infty \frac{e^{\Lip_U t_\ve}}{\lambda}
\int_H \|B - B^n  \| \, d\mu  \\
& \quad + \frac{\|h\|_\infty}{\sqrt{\lambda}} \left(2 \|f\|_\infty^2 + 1\right)^{\frac 12}
\left( \int_H \|U^n - C^n \|^2_{\gamma_0} \, d\mu\right)^{\frac 12} \\
& \le \frac{3\ve}4 + \|f\|_{C_b^1 (\Bbb R^{n_0})} \|h\|_\infty  \frac{e^{\Lip_U t_\ve}}{\lambda}
\int_H \|B - B^n \| \, d\mu
\end{aligned}
$$
where the last inequality follows from \eqref{EstimateProp4_3_2}. Consequently,
$$
\begin{aligned}
\left| \int_H \varphi  \, h \, d\mu  \right|
& \le \limsup_{n\to\infty} \frac{3\ve}4 + \|f\|_{C_b^1 (\Bbb R^{n_0})} \|h\|_\infty
\frac{e^{\Lip_U t_\ve}}{\lambda} \int_H \|B - B^n  \| \, d\mu  =
\frac{3\ve}4 \, ,
\end{aligned}
$$
which is a contradiction to our assumption. Thus $h=0$ $\mu$-a.e. and the proof is complete.
\end{proof}

\medskip
\noindent
We have thus proven the following

\begin{theorem}
\label{uniqueness}
Let $(\bar{L},D(\bar{L}))$ be the closure of $(L,\test)$ in
$L^1(H,\mu)$. Then $(\bar{L},D(\bar{L}))$ generates a
$C_0$-semigroup of contractions $(\bar{P}_t)_{t\geq0}$ on
$L^1(H,\mu)$, $(\bar{P}_t)_{t\geq0}$ is Markovian and the measure
$\mu$ is $(\bar{P}_t)_{t\geq0}$-invariant.
\end{theorem}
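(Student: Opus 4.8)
The plan is to obtain Theorem~\ref{uniqueness} from Proposition~\ref{Prop4_3} via the Lumer--Phillips generation theorem, the remaining work being a routine verification of the sub-Markovian and invariance properties. First I would recall that $\test$ is dense in $L^1(H,\mu)$ and that $(L,\test)$ is dissipative in $L^1(H,\mu)$ (this is established in \cite{Es-St}), hence closable; write $(\bar L,D(\bar L))$ for its closure, which is again densely defined and dissipative. By the Lumer--Phillips theorem, $(\bar L,D(\bar L))$ generates a $C_0$-semigroup of contractions $(\bar P_t)_{t\ge 0}$ on $L^1(H,\mu)$ as soon as the range $(\lambda-L)(\test)$ is dense in $L^1(H,\mu)$ for some (equivalently, all) $\lambda>0$.

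The decisive step is the density of this range, and here I would argue by duality. Since $L^1(H,\mu)^\ast=L^\infty(H,\mu)$, it suffices to show that any $h\in L^\infty(H,\mu)$ annihilating $(\lambda-L)(\test)$, i.e.\ satisfying
\[
\int_H (\lambda-L)\varphi\,h\,d\mu=0\qquad\text{for all }\varphi\in\test,
\]
vanishes $\mu$-a.e. Choosing a bounded Borel representative $h\in\mathcal{B}_b(H)$, this is precisely the conclusion of Proposition~\ref{Prop4_3}. Hence $(\lambda-L)(\test)$ is dense and $\bar L$ generates the desired $C_0$-semigroup of contractions $(\bar P_t)_{t\ge 0}$ on $L^1(H,\mu)$.

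It then remains to identify $(\bar P_t)_{t\ge 0}$ as a Markovian semigroup leaving $\mu$ invariant. For the invariance I would use ${\bf (A_0)}$: since $\int_H L\varphi\,d\mu=0$ for every $\varphi\in\test$ with $L\varphi\in L^1(H,\mu)$, and $\test$ is a core for $\bar L$, the constant function $\ein\in L^\infty(H,\mu)$ belongs to $D(\bar L^\ast)$ with $\bar L^\ast\ein=0$; consequently $\bar P_t^\ast\ein=\ein$, that is $\int_H \bar P_t\varphi\,d\mu=\int_H\varphi\,d\mu$ for all $\varphi\in L^1(H,\mu)$, which is the asserted $(\bar P_t)$-invariance of $\mu$. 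For positivity I would invoke that $(L,\test)$, being the Kolmogorov operator of a diffusion restricted to cylindrical functions, is a Dirichlet operator (it satisfies the positive maximum principle and is dissipative on every $L^p(H,\mu)$, $p\in[1,\infty[$; cf.\ \cite{Es-St}), so that the resolvent $G_\lambda=(\lambda-\bar L)^{-1}$ is positivity preserving and $\lambda G_\lambda$ is an $L^\infty$-contraction; hence $(\bar P_t)_{t\ge 0}$ is sub-Markovian, $0\le \bar P_t\ein\le\ein$. Finally, combining sub-Markovianity with the invariance identity applied to $f=\ein$ gives $\int_H\bar P_t\ein\,d\mu=\mu(H)=\int_H\ein\,d\mu$, whence $\bar P_t\ein=\ein$ $\mu$-a.e.; thus $(\bar P_t)_{t\ge 0}$ is conservative and therefore Markovian.

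I expect the only delicate point of the present theorem, beyond the bookkeeping, to be ensuring that the generated contraction semigroup is genuinely sub-Markovian rather than merely a contraction semigroup; this is where the Dirichlet-operator structure of $(L,\test)$ is used. The substantive analytic content---the quantitative comparison of $\int \varphi\,h\,d\mu$ with the approximating gradient estimates---has already been carried out in Lemma~\ref{lemma4_1} and Proposition~\ref{Prop4_3}, so the rest is a standard packaging via Lumer--Phillips together with the adjoint relation $\bar L^\ast\ein=0$.
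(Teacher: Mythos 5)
Your proposal is correct and follows essentially the same route as the paper: Proposition~\ref{Prop4_3} gives density of $(\lambda-L)(\test)$ by $L^1$--$L^\infty$ duality, Lumer--Phillips then yields the contraction semigroup, and Markovianity plus $\mu$-invariance follow from the Dirichlet-operator structure and infinitesimal invariance (the paper simply delegates this last part to \cite[Theorem 3.4]{Es-St}, whose argument is the one you sketch). No substantive difference.
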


\begin{proof}
Proposition \ref{Prop4_3} implies that for $\lambda > 0$ the range $(\lambda - L)(\test )$ is dense in
$L^1 (H, \mu )$, so that $(\lambda - \bar{L})(D(\bar{L})) = L^1 (H, \mu )$. An application of Lumer-Phillips's
theorem (see \cite[Theorem 3.15]{EN}) implies that $\bar{L}$ generates a $C_0$-semigroup $(\bar{P}_t)_{t\ge 0}$
of contractions. The proof of the Markovianity of $(\bar{P}_t)_{t\ge 0}$ and its $\mu$-invariance is exactly the same
as the proof of the corresponding statements in \cite[Theorem 3.4]{Es-St}.
\end{proof}

\begin{cor}
Suppose that \eqref{sde0} has a unique mild solution $X(t,x)$, $t\ge 0$, for any initial condition $x\in H$, and that
$x\mapsto X(t,x)$ is measurable, $t\ge 0$. If the measure $\mu$ is subinvariant for the associated transition semigroup
$$
P_t\varphi (x) = E\left( \varphi (X(t,x))\right) \, , \varphi\in\cB_b (H)\, ,
$$
i.e., $\int P_t \varphi\, d\mu \le \int\varphi\, d\mu$ for all $\varphi\in \cB_b (H)$, $\varphi\ge 0$, then $P_t\varphi$
is a $\mu$-version of $\bar{P}_t\varphi$ for all $\varphi\in\cB_b (H)$.
\end{cor}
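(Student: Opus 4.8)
The plan is to show that the transition semigroup $(P_t)_{t\ge 0}$ induces, via subinvariance of $\mu$, a $C_0$-semigroup of contractions $(\hat P_t)_{t\ge 0}$ on $L^1(H,\mu)$ whose generator extends $(L,\test)$; the $L^1$-uniqueness established in Theorem~\ref{uniqueness} then forces $(\hat P_t)_{t\ge 0}=(\bar P_t)_{t\ge 0}$, which is exactly the claim.

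First I would use subinvariance to get the extension. For $\varphi\in\cB_b(H)$ one has $\int_H|P_t\varphi|\,d\mu\le\int_H P_t|\varphi|\,d\mu\le\int_H|\varphi|\,d\mu$, so $P_t$ is an $L^1(\mu)$-contraction on the dense subspace $\cB_b(H)\subset L^1(H,\mu)$ and extends uniquely to a contraction $\hat P_t$ on $L^1(H,\mu)$, the semigroup law $P_{t+s}=P_tP_s$ passing to $\hat P_t$. Strong continuity at $0$ need only be checked on the dense set $C_b(H)$ (using that the $\hat P_t$ are uniformly bounded): for $\varphi\in C_b(H)$ the $\mathbb{P}$-a.s.\ continuity of the $H$-valued paths $t\mapsto X(t,x)$ of the mild solution together with bounded convergence gives $P_t\varphi(x)=\E\big(\varphi(X(t,x))\big)\to\varphi(x)$ for every $x$, whence $\hat P_t\varphi\to\varphi$ in $L^1(H,\mu)$ by dominated convergence, $\mu$ being finite. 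Thus $(\hat P_t)_{t\ge 0}$ is a $C_0$-semigroup of contractions on $L^1(H,\mu)$; let $(\hat L,D(\hat L))$ be its generator.

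Next I would identify $\hat L$ on cylindrical functions. For $\varphi\in\test$, $\varphi=f(\la\cdot,h_1\ra,\dots,\la\cdot,h_m\ra)$ with $h_i\in D(A)$, each coordinate $\la X(t,x),h_i\ra$ satisfies the one-dimensional It\^o equation
$$
d\la X(t),h_i\ra=\la X(t),Ah_i\ra\,dt+\la B(X(t)),h_i\ra\,dt+\la h_i,\sqrt Q\,dW_t\ra\, ,
$$
a standard consequence of the mild formulation since $h_i\in D(A)$ (cf.\ Proposition~3.1 in \cite{Es-St}); applying the finite-dimensional It\^o formula to $f$ and taking expectations yields the Dynkin identity $P_t\varphi(x)=\varphi(x)+\int_0^t P_s(L\varphi)(x)\,ds$. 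Here $L\varphi\in L^1(H,\mu)$ by ${\bf (A_1)}$ and the moment estimate $\int_H\|x\|\,\mu(dx)<\infty$ of Theorem~\ref{mainresult}, and $\int_0^t\big(\int_H P_s|L\varphi|\,d\mu\big)\,ds\le t\,\|L\varphi\|_{L^1(\mu)}<\infty$ shows that the right-hand integral is $\mu$-a.e.\ finite and, in $L^1(H,\mu)$, equals $\int_0^t\hat P_s(L\varphi)\,ds$. Dividing by $t$ and letting $t\downarrow 0$, strong continuity of $(\hat P_t)$ applied to $L\varphi$ gives $t^{-1}(\hat P_t\varphi-\varphi)\to L\varphi$ in $L^1(H,\mu)$, i.e.\ $\varphi\in D(\hat L)$ with $\hat L\varphi=L\varphi$. (Integrating the Dynkin identity against $\mu$ and using subinvariance re-derives ${\bf (A_0)}$, so that Theorem~\ref{uniqueness} does apply.)

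Finally, $\hat L$ extends $(L,\test)$, hence also its closure $(\bar L,D(\bar L))$. Both $\hat L$ and $\bar L$ generate $C_0$-semigroups of contractions, and $\lambda-\bar L$ maps $D(\bar L)$ onto $L^1(H,\mu)$ for every $\lambda>0$ by Theorem~\ref{uniqueness}; since $\lambda-\hat L$ is injective and agrees with $\lambda-\bar L$ on $D(\bar L)$, given $u\in D(\hat L)$ one picks $v\in D(\bar L)$ with $(\lambda-\bar L)v=(\lambda-\hat L)u$ and concludes $u=v\in D(\bar L)$. Hence $\hat L=\bar L$ and $\hat P_t=\bar P_t$ for all $t\ge 0$. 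As $\hat P_t\varphi$ is by construction the $L^1(\mu)$-class of the bounded measurable function $P_t\varphi$, this is precisely the assertion that $P_t\varphi$ is a $\mu$-version of $\bar P_t\varphi$ for every $\varphi\in\cB_b(H)$. The step I expect to be the main obstacle is the Dynkin identity for the merely mild solution combined with the unboundedness of $L\varphi$: this is handled by reducing to the componentwise one-dimensional equations (using $h_i\in D(A)$) and by working throughout with the $L^1(\mu)$-extension $\hat P_s$ rather than the pointwise semigroup; the rest is soft functional analysis.
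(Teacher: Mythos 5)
Your proposal is correct and follows exactly the route the paper takes: the paper's own proof is just the two-sentence remark that $(P_t)_{t\ge 0}$ induces a $C_0$-semigroup on $L^1(H,\mu)$ whose generator extends $(L,\test)$, and that $L^1$-uniqueness (Theorem \ref{uniqueness}) then forces it to coincide with $(\bar P_t)_{t\ge 0}$. You have simply filled in the details (contractivity from subinvariance, strong continuity, the Dynkin identity via Proposition 3.1 of \cite{Es-St}, and the standard generator-comparison argument), all consistent with what the paper leaves implicit.
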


\medskip
\noindent
For the proof of the Corollary, it is sufficient to note that under the assumptions made, the transition semigroup
$(P_t)_{t\ge 0}$ induces a $C_0$-semigroup $(\tilde{P}_t)_{t\ge 0}$ on $L^1 (H,\mu )$ whose infinitesimal generator
extends $(L, \test )$. Since the latter is $L^1$-unique, we conclude that $\tilde{P}_t = \bar{P}_t$, $t\ge 0$.


\section{Application}
\label{SectionExamples}

\subsection{Stochastic Burgers equation}
Let $I = [0, 1]\subset\R$ and $A = \frac{d^2}{dx^2}$ be the
Laplacian with Dirichlet boundary conditions and consider the
stochastic partial differential equation
\begin{equation}
\label{SB} dX(t,x) = \left( \frac{d^2X}{dx^2}(t,x) +
\partial_x(X^2(t,x))\right)\, dt + \eta (t,x) \, , \quad (t,x) \in \R_+\times
I\, ,
\end{equation}
where $\eta (t,x) = dW_t (x)$ and $(W_t)$ is a cylindrical Wiener
process on $L^2 (I)$ with covariance operator $Q = \left( -
A\right)^{-2\gamma_0}$ for some $\gamma_0\in (0,\frac 14)$ fixed.
This implies in particular that the stochastic convolution
corresponding to \eqref{SB} has a continuous version in
$V_{\gamma_0}\defeq D((-A)^{\gamma_0})$. Therefore by using a
similar argument as in \cite[Chap 14]{DaZ2} (see also \cite{HM}) one
can prove the existence of a unique mild solution $X(t)$, $t\geq 0$
of \eqref{SB}. Existence of an invariant probability measure $\mu$
for \eqref{SB} has been shown in \cite{Da-Ga}, \cite{DaZ2}. We shall
mention that in the sequel we will consider $X(t)$, $t\geq 0$ as a
stationary solution for \eqref{SB} (see section 1).

\medskip
\noindent
It is clear that the nonlinear part of the drift term
$B(u)\defeq \partial_x(u^2)$ is neither Lipschitz nor one-sided
Lipschitz. However, it is straightforward to check that
\begin{equation}\label{D}
\la B(u)-B(v),u-v\ra\leq \frac 12 \|u-v\|_{\frac 12}^2+\la
u^3-v^3,u-v\ra,\quad u,\:v\in V_{\frac 12},
\end{equation}

\noindent using the elementary inequality
$$
\frac 12 (a+b)^2(a-b)^2\leq (a^3-b^3)(a-b),\quad a,\:b\in\R.
$$

\noindent
We remark that the coefficients of \eqref{SB} satisfy the following Lyapunov-condition
\begin{equation}
\label{LB} \la Ay+\partial_x(y+w)^2,y\ra \leq -\frac 12 \|y\|_{\frac
12}^2+\alpha\|w\|_{\frac 18}^2\|y\|_{\frac 12}^2+\beta\|w\|_{\frac
18}^4\, , y \in D (A), w\in V_{\frac{1}{2}}\, .
\end{equation}

\noindent
Indeed, for $y$, $w\in V_{\frac 12}$ it follows that
\begin{equation*}
\begin{split}
\int_I\partial_x(y+w)^2\:y\:dx&=-2\int_I \partial_xy\cdot
yw\:dx-\int_I \partial_xy \cdot w^2\:dx\\
&\leq \frac 14\|y\|_{\frac 12}^2+\int_I y^2w^2\:dx+\frac
14\|y\|_{\frac 12}^2+\|w\|_{L^4(I)}^4\\
&\leq \frac 12 \|y\|_{\frac 12}^2+C\|w\|^2_{L^4(I)}\|y\|^2_{\frac
12}+\|w\|^4_{L^4(I)}.
\end{split}
\end{equation*}

\noindent
Using the Sobolev embedding $W^{\frac 14,2}(I)\hookrightarrow L^4(I)$ and the fact that
the norm of  $W^{\frac 14,2}(I)$ and $V_{\frac 18}$ are equivalent, we conclude that
\begin{equation}
\label{Est1}
\int_I \partial_x(y+w)^2\:y\:dx\leq \frac 12 \|y\|_{\frac 12}^2
+\alpha \|w\|^2_{\frac 18}\|y\|^2_{\frac 12}+ \beta \|w\|^4_{\frac 18}
\end{equation}
for suitable constants $\alpha$, $\beta$, hence \eqref{LB} follows.
The eigenvalues of  $-A$ are given by $\lambda_k = + \pi^2 k^2$, $k\geq 1$. It follows from
Corollary \ref{Cor} that for $T>0$ we have
\begin{equation*}
\sup\limits_{0\leq t\leq T}\|W_{A-\lambda}(t)\|_{{\gamma}}^2\leq
\lambda^{-2\ve}\cdot M_{\delta,\gamma,\ve},\quad \lambda>0,
\end{equation*}
for some random variable $M_{\delta,\gamma,\ve}$ with finite moments of any order, if
\begin{equation}
\label{c_1}
\kappa\defeq (\delta+\gamma_0)-(\gamma+\ve)>\frac 14,\quad \delta\in ]0,\frac 12[,
\end{equation}
because then
\begin{equation*}
Z_{\gamma,\delta,\ve} =\sum\limits_{k=1}^{\infty}\lambda_{k}^{-2(\delta-\gamma-\ve)}q_k
=\sum\limits_{k=1}^{\infty}\lambda_k^{-2(\delta+\gamma_0-\gamma-\ve)}
=\pi^{-4\kappa }\sum\limits_{k=1}^{\infty} k^{-4 \kappa}<\infty\, .
\end{equation*}

\noindent
Theorem \ref{mainresult} now implies the following moment estimates
\begin{equation}
\label{M1}
\begin{split}
\int \|u\|_0^{2p}\:\mu(du)< + \infty\,  , & \quad p\ge 0  \\
\int \|u\|_{\sigma}^2\|u\|_0^{2p}\:\mu(du) < +\infty\, ,
& \quad p\ge 0,\:\:\sigma\leq \frac 12\, .
\end{split}
\end{equation}
\noindent
The following Proposition will be crucial for the uniqueness of the Kolmogorov operator
associated with \eqref{SB}.
\begin{proposition}
Let $\beta\in (\frac 14,\gamma_0+\frac 14)$. Then
\begin{itemize}
\item[(i)] $\|B(u)\|\in L^1(\mu)$.
\item [(ii)] $\|u^3\|_{\beta}\in L^2(\mu)$.
\end{itemize}
\end{proposition}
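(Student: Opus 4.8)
The plan is to estimate both quantities by reducing norms of powers of $u$ to the moment estimates \eqref{M1} already established, via Sobolev embeddings on $I = [0,1]$. For part (i), since $B(u) = \partial_x(u^2) = 2u\,\partial_x u$, I would first note that $\|B(u)\| = \|\partial_x(u^2)\|_{L^2(I)}$, and that $\partial_x : V_{1/2} \to L^2(I)$ is bounded (indeed $\|\partial_x v\|_0 \le C\|v\|_{1/2}$ by definition of the interpolation norm with $A = d^2/dx^2$). So it suffices to bound $\|u^2\|_{1/2} = \|u^2\|_{H^1_0(I)}$ in $L^1(\mu)$. Using the product/chain rule, $\|u^2\|_{H^1} \le C\|u\|_{L^\infty(I)}\|u\|_{H^1_0(I)} \le C\|u\|_{1/2}^{3/2}\|u\|_0^{1/2}$ after the Sobolev embedding $H^1_0(I) \hookrightarrow L^\infty(I)$ and interpolation. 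Then Cauchy--Schwarz (or Young) gives $\int \|u\|_{1/2}^{3/2}\|u\|_0^{1/2}\,\mu(du) \le (\int \|u\|_{1/2}^2\|u\|_0^{2/3}\mu)^{3/4}(\int \|u\|_0^2\mu)^{1/4}$, and the first factor is finite by the second line of \eqref{M1} with $\sigma = 1/2$ and a suitable small power of $\|u\|_0$, the second factor by the first line of \eqref{M1}. (One can also simply dominate $\|u\|_0^{2/3}$ by $1 + \|u\|_0^2$ and use \eqref{M1} directly.)

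For part (ii), I want $\|u^3\|_\beta = \|(-A)^\beta (u^3)\|_0 \in L^2(\mu)$ for $\beta \in (1/4, \gamma_0 + 1/4)$, i.e.\ $u^3 \in H^{2\beta}$ with $2\beta \in (1/2, 2\gamma_0 + 1/2) \subset (1/2, 1)$, so $H^{2\beta}$ is a fractional Sobolev space of order below $1$ that need not respect Dirichlet boundary conditions — this is why $\beta < \gamma_0 + 1/4 < 1/2$ matters and why the identification $V_\beta \cong W^{2\beta,2}(I)$ (as in the Burgers computation above, where $V_{1/8} \cong W^{1/4,2}$) is available. The key estimate is a fractional Leibniz / Gagliardo-type bound: for $0 < 2\beta < 1$, $\|u^3\|_{W^{2\beta,2}(I)} \le C\|u\|_{L^\infty(I)}^2 \|u\|_{W^{2\beta,2}(I)} \le C\|u\|_{1/2}^2\|u\|_{2\beta}$, using the Sobolev embedding $V_{1/2} = H^1_0(I) \hookrightarrow L^\infty(I)$ and then interpolating $\|u\|_{2\beta} \le C\|u\|_0^{1-2\beta}\|u\|_{1/2}^{2\beta}$ (valid since $2\beta < 1$). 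This yields $\|u^3\|_\beta^2 \le C\|u\|_0^{2(1-2\beta)}\|u\|_{1/2}^{4 + 4\beta}$. I would then again dominate $\|u\|_0^{2(1-2\beta)} \le 1 + \|u\|_0^2$ and estimate $\int \|u\|_{1/2}^{4+4\beta}(1 + \|u\|_0^2)\,\mu(du)$ by splitting off one factor of $\|u\|_{1/2}^2$ and a power of $\|u\|_0$, invoking the second line of \eqref{M1} for the term containing $\|u\|_{1/2}^2$ (with the remaining $\|u\|_0$-power absorbed into the arbitrary polynomial weight there) together with the first line for the pure $\|u\|_0$-moments.

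The main obstacle is the fractional-order product estimate $\|u^3\|_{W^{2\beta,2}(I)} \lesssim \|u\|_{L^\infty}^2\|u\|_{W^{2\beta,2}(I)}$ and, relatedly, the careful identification of $V_\beta$ with $W^{2\beta,2}(I)$ in the range $2\beta \in (1/2,1)$; the rest is bookkeeping with Sobolev embeddings on the interval, interpolation inequalities, and Young/Hölder to match the available moments in \eqref{M1}. I would prove the product bound using the Gagliardo seminorm: $[u^3]_{W^{2\beta,2}}^2 = \int\!\!\int \frac{|u(x)^3 - u(y)^3|^2}{|x-y|^{1+4\beta}}\,dx\,dy$ and the pointwise factorization $|a^3 - b^3| = |a^2 + ab + b^2|\,|a-b| \le 3(\|u\|_{L^\infty}^2)\,|a-b|$, which reduces it immediately to $\|u\|_{L^\infty}^4 [u]_{W^{2\beta,2}}^2$; combined with the trivial $L^2$ bound $\|u^3\|_0 \le \|u\|_{L^\infty}^2\|u\|_0$, this gives the claim with no genuine harmonic analysis needed. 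This same elementary factorization also handles the power $u^3$ appearing here cleanly, so the argument stays within the toolkit already used for \eqref{D} and \eqref{Est1}.
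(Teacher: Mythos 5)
Part (i) of your argument is fine and is essentially the paper's proof of its Lemma \ref{lemma4.1}: both reduce $\|B(u)\|$ to $C\|u\|_{\frac 12}^2$ (via $\|u\|_\infty\lesssim \|u\|_{\frac 14+\ve}$ or $H^1_0\hookrightarrow L^\infty$) and invoke \eqref{M1}.

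Part (ii), however, has a genuine gap at the final integrability step. Your pointwise estimate is correct as far as it goes: the Gagliardo--seminorm factorization does give $\|u^3\|_{\beta}\le C\|u\|_{L^\infty}^2\|u\|_{\beta}$, and after interpolation you arrive at $\|u^3\|_\beta^2\le C\|u\|_0^{2(1-2\beta)}\|u\|_{\frac 12}^{4+4\beta}$. But \eqref{M1} only controls $\int \|u\|_{\sigma}^{2}\|u\|_0^{2p}\,d\mu$, i.e.\ the \emph{second} moment of the $V_{\frac 12}$-norm weighted by arbitrary powers of the \emph{$H$-norm} $\|u\|_0$. Your estimate requires a moment of order $4+4\beta>4$ of $\|u\|_{\frac 12}$, and the excess factors $\|u\|_{\frac 12}^{2+4\beta}$ cannot be ``absorbed into the arbitrary polynomial weight'', because that weight lives in $\|u\|_0$, not in $\|u\|_{\frac 12}$; no interpolation converts $V_{\frac 12}$-norms into $H$-norms. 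Playing with how many factors you put in $L^\infty$ does not help: since $\beta>\frac 14$, the $\|u\|_\beta$ factor alone already contributes $\|u\|_{\frac 12}^{4\beta}$ with $4\beta>1$, and the two $L^\infty$ factors contribute at least $\|u\|_{\frac 12}^{2}$ more, so any static product estimate lands strictly above the available second moment. This is precisely why the paper does \emph{not} argue statically in Lemma \ref{lemma4.2}: it first proves $\int\|u\|_{L^6}^6\,d\mu<\infty$ by Sobolev embedding and interpolation from \eqref{M1} (this part is analogous in spirit to yours), and then runs a \emph{dynamic} energy estimate along the stationary solution, namely $\frac{d}{dt}\|Y_t\|_{L^{2p}}^{2p}\le -c\int (\partial_x Y_t)^2 Y_t^{2(p-1)}dx + C(\|W_A(t)\|_{L^8}^8+\|Y_t\|_{L^{2p}}^{2p})$ for $Y_t=X_t-W_A(t)$, which upon integration in $t$ and taking expectations yields $\int_0^T\E\|Y_t^3\|_{\frac 12}^2\,dt<\infty$ directly from the equation; combining this with moment bounds on $W_A(t)^3$ in $V_\beta$ (using $\beta<\frac 14+\gamma_0$) and stationarity gives $\int\|u^3\|_\beta^2\,d\mu<\infty$. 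That dynamic step, which manufactures the missing higher-order $V_{\frac 12}$-type integrability of powers of the solution, is the idea your proposal lacks.
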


\noindent
The proof is accomplished in the following three Lemmata.
\begin{lemma}
\label{lemma4.1} We have  $\int\|B(u)\|\:\mu(du)<+\infty$.
\end{lemma}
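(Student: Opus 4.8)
The plan is to bound $\|B(u)\| = \|\partial_x(u^2)\|_{L^2(I)} = \|2u\,\partial_x u\|_{L^2(I)}$ by interpolation between the norms controlled by \eqref{M1}. First I would recall that $B(u) = \partial_x(u^2) = 2u\partial_x u$, so by Cauchy--Schwarz or H\"older on $I$ we get $\|B(u)\| \le 2\|u\|_{L^\infty(I)}\|\partial_x u\|_{L^2(I)}$. The factor $\|\partial_x u\|_{L^2(I)}$ is comparable to $\|u\|_{\frac12}$, and $\|u\|_{L^\infty(I)}$ can be controlled by a fractional Sobolev norm: using the Sobolev embedding $W^{s,2}(I)\hookrightarrow L^\infty(I)$ for $s>\frac12$, and the equivalence of the $W^{s,2}(I)$-norm with $\|\cdot\|_{s/2}$, we obtain $\|u\|_{L^\infty(I)} \le C\|u\|_\sigma$ for any fixed $\sigma$ slightly above $\frac14$. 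Hence $\|B(u)\| \le C\|u\|_\sigma \|u\|_{\frac12}$ with $\sigma \in (\frac14,\frac12)$.

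Next I would interpolate $\|u\|_\sigma$ between $\|u\|_0$ and $\|u\|_{\frac12}$: since $\sigma < \frac12$, we have $\|u\|_\sigma \le C\|u\|_0^{1-2\sigma}\|u\|_{\frac12}^{2\sigma}$, so that
\begin{equation*}
\|B(u)\| \le C\|u\|_0^{1-2\sigma}\|u\|_{\frac12}^{1+2\sigma}\, .
\end{equation*}
Now apply Young's inequality with exponents chosen so that the $\|u\|_{\frac12}$-factor appears only to the power $2$ (so that it pairs with the second estimate in \eqref{M1}), i.e. write $1+2\sigma = 2\cdot\frac{1+2\sigma}{2}$ and split
\begin{equation*}
\|u\|_0^{1-2\sigma}\|u\|_{\frac12}^{1+2\sigma} \le C\left( \|u\|_0^{r} + \|u\|_{\frac12}^2\|u\|_0^{q}\right)
\end{equation*}
for suitable finite exponents $r,q\ge 0$ depending only on $\sigma$. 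Integrating against $\mu$ and invoking \eqref{M1} — the first bound gives $\int\|u\|_0^r\,\mu(du)<\infty$ and the second, applied with $\sigma=\frac12$ and $p=q/2$, gives $\int\|u\|_{\frac12}^2\|u\|_0^q\,\mu(du)<\infty$ — yields $\int\|B(u)\|\,\mu(du)<\infty$.

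The only mildly delicate point is making sure one stays strictly inside the admissible ranges: the Sobolev embedding forces $\sigma>\frac14$ but we also need $\sigma<\frac12$ for the interpolation inequality to be nontrivial, and \eqref{M1} only gives the weighted estimate for $\sigma\le\frac12$ — all of these are compatible, so one simply fixes, say, $\sigma = \frac38$. I do not expect a genuine obstacle here; the estimate is a routine combination of the Gagliardo--Nirenberg/interpolation inequality, Sobolev embedding on the interval, Young's inequality, and the a priori bounds \eqref{M1} furnished by Theorem \ref{mainresult}. The argument for part (ii) of the Proposition (the $L^2(\mu)$-bound on $\|u^3\|_\beta$) will be entirely analogous but will require the full strength of the moment estimates with higher powers $p$, and is presumably carried out in the subsequent two Lemmata.
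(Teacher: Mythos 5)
Your argument is correct and follows essentially the same route as the paper: write $B(u)=2u\,\partial_xu$, bound $\|B(u)\|\le 2\|u\|_{\infty}\|u\|_{\frac 12}$, control $\|u\|_{\infty}$ by $\|u\|_{\frac 14+\ve}$ via Sobolev embedding, and invoke the moment estimates \eqref{M1}. The only difference is that the paper bypasses your interpolation-plus-Young step by simply noting $\|u\|_{\frac 14+\ve}\le C\|u\|_{\frac 12}$, so that $\|B(u)\|\le C\|u\|_{\frac 12}^{2}$ is integrable directly from the second line of \eqref{M1} with $\sigma=\frac 12$ and $p=0$.
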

\begin{proof}
First note that $\|u\|_{\infty}\leq C_{\frac 14 + \ve}\|u\|_{\frac 14+\ve}$ for
any $\ve>0$, so that
$$
\|B(u)\|\leq \|u\|_{\frac 12}\|u\|_{\infty}\leq C_{\frac 12}\|u\|^2_{\frac 12}
$$
and now  the moment estimate \eqref{M1} implies the assertion.
\end{proof}
\begin{lemma}
\label{lemma4.2} Let $\beta\in (\frac 14,\frac 14+\gamma_0)$. Then
for $p=1,\:2,\:3$ we have
$$\int\|u^3\|^2_{\beta}\:\mu(du)+\int\|u^p\|^2\:\mu(du)<+\infty.$$
\end{lemma}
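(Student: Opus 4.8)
The plan is to reduce the two estimates $\int \|u^3\|_\beta^2\,\mu(du)<\infty$ and $\int\|u^p\|^2\,\mu(du)<\infty$ ($p=1,2,3$) to the moment bounds \eqref{M1} already established, via Sobolev embeddings and the fact that $V_{1/2}$ embeds into $L^\infty(I)$ (more precisely $\|u\|_\infty\le C\|u\|_{1/4+\varepsilon}$, as used in Lemma \ref{lemma4.1}). First I would dispose of the lower-order terms: since $I$ is bounded, $\|u^p\|_{L^2}=\|u\|_{L^{2p}}^p\le C\|u\|_\infty^{p-1}\|u\|_{L^2}\le C\|u\|_{1/2}^{p-1}\|u\|_0$, and the right-hand side is in $L^1(\mu)$ by the second line of \eqref{M1} applied with $\sigma=\tfrac12$ and the appropriate power of $\|u\|_0$ (after a Young/Hölder split if $p=3$, using also the first line of \eqref{M1}). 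That handles $\int\|u^p\|^2\,\mu(du)<\infty$ for $p=1,2,3$.

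The main work is the $V_\beta$-bound on $u^3$. The key step is a pointwise fractional-Sobolev (Gagliardo–Nirenberg type) estimate: for $\beta\in(\tfrac14,\tfrac14+\gamma_0)$ one has $\beta<\tfrac12$, so $V_\beta\cong W^{2\beta,2}(I)$, and I would use a product estimate of the form
\begin{equation*}
\|u^3\|_\beta \le C\,\|u\|_\infty^2\,\|u\|_{\gamma(\beta)}
\end{equation*}
for a suitable interpolation exponent $\gamma(\beta)\le\tfrac12$, or alternatively the cruder $\|u^3\|_{W^{2\beta,2}}\le C\|u\|_{L^\infty}^2\|u\|_{W^{2\beta,2}}$ valid because $W^{2\beta,2}(I)$ is a module over $L^\infty\cap W^{2\beta,2}$ for $2\beta<\tfrac12$. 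Squaring gives $\|u^3\|_\beta^2\le C\|u\|_\infty^4\|u\|_{\gamma(\beta)}^2\le C\|u\|_{1/4+\varepsilon}^4\|u\|_{\gamma(\beta)}^2$. Then I would interpolate $\|u\|_{1/4+\varepsilon}$ between $\|u\|_0$ and $\|u\|_{1/2}$ and absorb everything into an expression of the form $\|u\|_{1/2}^2\|u\|_0^{2p}$ plus a pure power $\|u\|_0^{2q}$, each of which is $\mu$-integrable by \eqref{M1}; a Young inequality handles the splitting of the product $\|u\|_\infty^4\|u\|_{\gamma(\beta)}^2$ into a term carrying the full $\|\cdot\|_{1/2}^2$ weight and a term that is a pure power of $\|\cdot\|_0$.

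The hard part will be pinning down the correct product/multiplication inequality in the fractional space $W^{2\beta,2}(I)=V_\beta$ — one must check that the exponents close, i.e. that the Sobolev index $\gamma(\beta)$ arising from $\|u^3\|_\beta\lesssim\|u\|_\infty^2\|u\|_{\gamma(\beta)}$ indeed satisfies $\gamma(\beta)\le\tfrac12$ precisely when $\beta<\tfrac14+\gamma_0$ (the upper constraint on $\beta$ is exactly what makes this work, since $\gamma_0<\tfrac14$). Once the multiplicative inequality is in hand with $\gamma(\beta)\le\tfrac12$, the rest is a routine interpolation-and-Young argument reducing everything to \eqref{M1}, as in Theorem \ref{mainresult}(ii). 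I would state the needed product estimate as an elementary one-dimensional fact (citing a standard reference on Sobolev multiplication, or proving it directly via the characterization of $W^{s,2}(I)$ by the Gagliardo seminorm for $s<\tfrac12$), and then carry out the interpolation bookkeeping.
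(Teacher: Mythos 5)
Your treatment of $\int\|u^p\|^2\,d\mu$ is fine and close in spirit to the paper's first step (which gets $\int\|u\|_{L^6(I)}^6\,d\mu<\infty$ by Sobolev embedding and interpolation against \eqref{M1}). The gap is in the main part, the bound on $\int\|u^3\|_\beta^2\,d\mu$. A small slip first: $\beta>\frac14$ forces $2\beta>\frac12$, not $2\beta<\frac12$ (the multiplication inequality $\|uv\|_{W^{s,2}}\lesssim \|u\|_\infty\|v\|_{W^{s,2}}+\|v\|_\infty\|u\|_{W^{s,2}}$ does hold for all $s\in(0,1)$ via the Gagliardo seminorm, so this alone is repairable). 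The real problem is that the exponent bookkeeping cannot close. Any pointwise estimate $\|u^3\|_\beta\lesssim\|u\|_\infty^2\|u\|_{\gamma(\beta)}$ necessarily has $\gamma(\beta)\ge\beta>\frac14$ (you cannot produce $2\beta>\frac12$ derivatives of $u^3$ from fewer derivatives of $u$), and $\|u\|_\infty\lesssim\|u\|_{\frac14+\varepsilon}$ is essentially sharp in one dimension. Squaring and interpolating each factor between $\|u\|_0$ and $\|u\|_{\frac12}$, the total homogeneity in $\|u\|_{\frac12}$ is $4(\frac12+2\varepsilon)+4\beta>3$. But \eqref{M1} controls only $\|u\|_{\frac12}^2\|u\|_0^{2p}$, i.e.\ exactly two powers of the $V_{\frac12}$-norm; Young's inequality redistributes exponents between factors but cannot lower the total power of $\|u\|_{\frac12}$, so your reduction would require $\int\|u\|_{\frac12}^{3+}\,d\mu<\infty$, which is not available. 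Other Gagliardo--Nirenberg splittings (e.g.\ $\|u^3\|_{W^{2\beta,2}}\lesssim\|u\|_{W^{2\beta+\frac13,2}}\|u\|_{L^6}^2$) run into the same obstruction.

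The paper's proof is genuinely dynamic rather than static: it returns to the equation, writes $X_t=Y_t+W_A(t)$, and tests the equation for $Y_t$ against $Y_t^{2p-1}$ to obtain the $L^{2p}$-energy estimate $\int_0^T\big\|\partial_xY_t\cdot Y_t^{p-1}\big\|^2\,dt\lesssim \|Y_0\|_{L^{2p}(I)}^{2p}+\int_0^T\big(\|W_A(t)\|_{L^8(I)}^8+\|Y_t\|_{L^{2p}(I)}^{2p}\big)\,dt$. Since $\partial_x(Y^p)=pY^{p-1}\partial_xY$, this yields $\int_0^T\E\big(\|Y_t^3\|_{\frac12}^2\big)\,dt<\infty$ controlled by $\int\|u\|_{L^6(I)}^6\,d\mu$ (which you already have); this is new a priori information not derivable from \eqref{M1} by interpolation. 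The restriction $\beta<\frac14+\gamma_0$ then enters only through the noise: one needs $\E\big(\|W_A(t)^3\|_\beta^2\big)<\infty$, and $W_A(t)$ has spatial regularity $V_\gamma$ only for $\gamma<\frac14+\gamma_0$ --- not, as you guessed, through a Sobolev multiplication index. The missing idea in your proposal is precisely this extra energy estimate extracted from the SPDE itself.
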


\begin{proof}
First note that Sobolev's imbedding, followed by real interpolation,
implies that for $\theta>\frac 38$
\begin{equation*}
\|u\|_{L^p(I)} \leq C\|u\|_{W^{\frac{p-2}{2p},2}(I)}\leq
C\|u\|_{\frac{p-2}{4p}} \leq C \|u\|_{\frac 14+\theta}^{\frac
{p-2}{p(1+4\theta)}}\cdot
\|u\|_{0}^{\frac{2+4p\theta}{p(1+4\theta)}},
\end{equation*}
so that
$$
\|u\|_{L^p(I)}^p\leq C \|u\|_{\frac 14+\theta}^{\frac
{p-2}{1+4\theta}} \cdot \|u\|_{0}^{\frac{2+4p\theta}{1+4\theta}},
$$
and now \eqref{M1} implies that
$$
\int \|u\|_{L^p(I)}^p\:\mu(du)<+\infty\quad \mbox{if}\quad
\frac{p-2}{1+4\theta}\leq 2\Longleftrightarrow p\leq 4(1+2\theta).
$$

\noindent Since $\theta>\frac 38$ implies $4(1+2\theta)>7$, we thus
obtain that
$$
\int \|u^3\|^2\:\mu(du)<+\infty.
$$

\noindent Let us now prove $\int
\|u^3\|^2_{\beta}\:\mu(du)<+\infty$. To this end we consider again
the decomposition
$$
X_t=Y_t+W_A(t),\quad t\geq 0
$$
of the mild solution of \eqref{SB}. Then for $p\geq 1$

\begin{equation}\label{est2}
\begin{split}
\frac{1}{2p}\left(\frac{d}{dt}\|Y_t\|_{L^{2p}(I)}^{2p}\right)&=\int\frac{d^2Y_t}{d^2x}Y_t^{2p-1}\:dx+\int
\frac{d}{dx}\Big(Y_t+W_A(t)\Big)^2Y_t^{2p-1}\:dx\\
& = - (2p-1)\int\Big(\frac{dY_t}{dx}\Big)^2Y_t^{2(p-1)}\:dx-2(2p-1)\int W_A(t)Y_t^{2p-1}\frac{dY_t}{dx}\:dx\\
&\qquad -(2p-1)\int W_A(t)^2\frac{dY_t}{dx}Y_t^{2(p-1)}\:dx \\
&\leq -\frac{(2p-1)}{2}\int \Big(
\frac{dY_t}{dx}\Big)^2Y_t^{2(p-1)}\:dx+C\left(\|W_A(t)\|_{L^8(I)}^8+\|Y_t\|_{L^{2p}(I)}^{2p}\right).
\end{split}
\end{equation}

\noindent
Integrating \eqref{est2} with respect to $t$ we conclude that
\begin{equation}
\label{est3}
\int_0^T \Big \|\Big(\frac{d Y_t}{dx}\Big)\cdot
Y_t^{p-1}\Big\|^2\:dt\leq C_1\|Y_0\|_{L^{2p}(I)}^{2p}+C_2
\int_0^T\|W_A(t)\|_{L^8(I)}^8+\|Y_t\|_{L^{2p}(I)}^{2p}\:dt.
\end{equation}

\noindent
Clearly, for some constant $C>0$ we have
$$
\E \Big(\|W_A(t)\|_{L^8(I)}^8\Big)\leq C
\left(\sum\limits_{k=1}^{+\infty}\frac{1}{\lambda_k^{1+2 \gamma_{0}}}\right)^4<+\infty,
$$
so that \eqref{est3} implies that
\begin{equation}
\int_0^T \E\left(\Big \|\Big(\frac{dY_t}{dx}\Big)\cdot
Y_t^{p-1}\Big\|^2\right)\:dt\leq
C_1\E\Big(\|Y_0\|_{L^{2p}(I)}^{2p}\Big)+\tilde{C_2}+C_2\int_0^T\E\Big(\|Y_t\|_{L^{2p}(I)}^{2p}\Big)\:dt
\end{equation}
for uniform constants $C_1$, $\tilde{C_2}$ and $C_2$. Next, observe
that for $\beta < \frac 14+\gamma_0$ we have
\begin{equation*}
\sup\limits_{t\geq
0}\E\left(\|W_A(t)\|^{2p}_{\beta}\right)<+\infty,\quad p\geq 1,
\end{equation*}
hence for $T\geq 0$ we have
\begin{equation*}
\begin{split}
\int_0^T \E\Big(\|X_t^3\|_{\beta}^2\Big)\:dt &\leq C\left(\int_0^T
\E\Big(\|Y_t^3\|_{\beta}^2\Big)\:dt+\int_0^T
\E\Big(\|W_A(t)^3\|_{\beta}^2\Big)\:dt\right)\\ &\leq
C\left(\int_0^T \E\Big(\|Y_t^3\|_{\frac 12}^2\Big)\:dt+\int_0^T
\E\Big(\|W_A(t)^3\|_{\beta}^2\Big)\:dt\right)\\
&\leq C\left(T+\E\Big(\|Y_0\|_{L^6(I)}^6\Big)+\int_0^T
\E\Big(\|Y_t(x)\|^6_{L^6(I)}\Big)\:dt\right)\\
&\leq C\left(T+\E\Big(\|X_0\|_{L^6(I)}^6\Big)+\int_0^T
\E\Big(\|X_t(x)\|^6_{L^6(I)}\Big)\:dt\right)\\
&= C\left(T+\E\Big(\|X_0\|_{L^6(I)}^6\Big)+\int_0^T
\E\Big(\|X_t(x)^3\|^2\Big)\:dt\right)\, ,
\end{split}
\end{equation*}
where the constant $C$ may change from line to line. Thus integrating with respect
to $\mu$ and using the fact that $(X_t)_{t\geq 0}$ is a stationary solution for
\eqref{SB} with invariant distribution $\mu$ we get
$$
T\int_H \|x^3\|^2_{\beta}\:\mu(dx)\leq \tilde{C}\left(T+\int_H
\|x\|_{L^6(I)}^6\:\mu(dx)+T\int_H \|x^3\|^2\:\mu(dx)\right).
$$
This yields the statement of the lemma.
\end{proof}

\medskip
\noindent
If we denote by $L$ the Kolomogorov operator associated
with \eqref{SB}, we have by Theorem \ref{uniqueness} the closure
$(\bar{L}, D(\bar{L}))$ of $L$ in $L^{1}(H,\mu)$ generates a
$C_0$-semigroup of contractions $(\bar{P}_t)_{t\geq0}$ on
$L^1(H,\mu)$, $(\bar{P}_t)_{t\geq0}$ is Markovian and the measure
$\mu$ is $(\bar{P}_t)_{t\geq0}$-invariant.

\subsection{Stochastic equations modeling thin-film growth}
Let us consider the following stochastic partial differential
equation
\begin{equation}
\label{BH} du(t,x) = \left( -\partial_x^{(4)}u(t,x) +
\nu\partial_x^{(2)}u(t,x)-\partial_x^{(2)}\Big(\partial_x
u(t,x)\Big)^2\right)\, dt + \eta (t,x) \, , \quad (t,x) \in
\R_+\times I\, ,
\end{equation}
where $\nu\geq 0$, $\eta(t,x)=dW_t(x)$ is a Wiener process on
$L^2([0,1])$ with covariance operator satisfying
$$
Qe_k=q_k e_k,\quad k\geq 1,\quad (q_k)_{k\geq 1}\subseteq
\ell^{\infty}(\N),
$$
and $(e_k)_{k\geq 1}$ denotes the orthonormal basis of $H\defeq
L^2_0([0,1])=\{f\in L^2([0,1]):\:\:\int_0^1 f(r)\:dr=0\}$ consisting
of eigenvectors of the self-adjoint extension of
$$
A\defeq -\partial_x^{(4)}u+\nu \partial_x^{(2)}u
$$
in $H$ with periodic or Neumann boundary conditions.

\noindent
Bl\"omker and Hairer proved in \cite{BH} the existence of a
stationary solution of \eqref{BH}. In particular, they showed the
existence of an invariant measure $\mu$ satisfying the moment
estimate
$$
\int_H \log (1+\|x\|^2)\:\mu(dx)<+\infty.
$$
The purpose of the example is to demonstrate how to obtain improved
a priori moment estimates on $\mu$, using Theorem \ref{mainresult}.
To this end note that the coefficients of \eqref{BH} satisfy the
inequality
\begin{equation}\label{LTF}
\la Ay+B(y+w),y\ra \leq -\frac 14 \|y\|^2_{\frac
12}+\beta\|w\|^8_{\frac {5}{16}}\|y\|^2+\gamma\|w\|^2_{\frac{5}{16}}
\end{equation}
for suitable constants $\beta$ and $\gamma$, since
\begin{equation}\label{T-est1}
\begin{split}
\int_0^1 \partial_x^{(2)}\Big(
\partial_x(y+w)\Big)^2(x)y(x)\:dx&=2\int_0^1
\partial_x^{(2)}y(x)\cdot \partial_x y(x)\cdot \partial_x
w(x)\:dx+\int_0^1 \partial_x^{(2)}y(x) \Big( \partial w(x)
\Big)^2\:dx\\
&\hskip-2em\leq \frac 14 \|y\|^2_{\frac 12}+\int_0^1 \Big(\partial_x
y(x)\Big)^2 \Big(\partial_x w(x)\Big)^2\:dx+\frac 14\|y\|^2_{\frac
14}+\|\partial_x w\|^4_{L^4([0,1])}\\
&\hskip-2em\leq \frac 12 \|y\|^2_{\frac 12}+ C \|\partial_x
w\|^2_{L^2([0,1])}\|\partial_x y\|^2_{\infty}+\|\partial_x
w\|^4_{L^4([0,1])}.
\end{split}
\end{equation}

\noindent
Using the fact that $W^{\frac{5}{4},2}(0,1)\subset W^{1,4}(0,1)$
and $\|u\|_{W^{\frac{5}{4},2}(0,1)}\leq C\|w\|_{\frac{5}{16}}$, we
conclude that
\begin{equation}
\label{T-est2}
\int_0^1 \partial_x^{(2)}\Big(\partial_x(y+w)\Big)^2(x)y(x)\:dx
\leq \frac 34 \|y\|^2_{\frac 12}+ \beta\|\partial_x w\|^8_{\frac{5}{16}}\|y\|^2
+ \gamma\|w\|^2_{\frac{5}{16}}
\end{equation}
for suitable constants $\beta$, $\gamma$, hence \eqref{LTF} follows.

\noindent
We can arrange the eigenvalues $(\lambda_k)_{k\geq 1}$ of
$-A$ in such a way that $\lambda_k=4\pi^2k^2(4\pi^2k^2+\nu )$ with
multiplicity $1$ (in the case of Neumann boundary conditions) or $2$
(in the case of periodic boundary conditions). Then ${\bf (H_4)}$ is
satisfied for $\kappa\defeq \delta-\gamma-\ve>\frac 18$, $\delta\in
(0,\frac 12)$, because then
\begin{equation*}
Z_{\gamma,\delta,\ve}
\leq 2\sum\limits_{k=1}^{+\infty}\Big(4\pi^2k^2(4\pi^2k^2+\nu) \Big)^{-2(\delta-\gamma-\ve)}\cdot
\|q\|_{\infty}
\leq C\sum\limits_{k=1}^{+\infty}\kappa^{-8 \kappa}\quad \mbox{if
$\kappa>\frac 18$}.
\end{equation*}

\noindent
Theorem \ref{mainresult} now implies the following improved a priori moment estimates
\begin{cor}
For any invariant measure $\mu$ of \eqref{BH} we have

\begin{enumerate}
\item [(i)] $\int \|x\|_0^{2p}\:\mu(dx)<\infty$ for $p\ge 0$.
\item [(ii)] $\int \|x\|^2_\sigma\|x\|_0^{2p}\:\mu(dx) < \infty$ for $p\ge 0$, $\sigma\leq\frac 12$.
\end{enumerate}

\end{cor}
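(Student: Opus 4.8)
The plan is to recognise equation \eqref{BH} as an instance of \eqref{sde0} and to check that hypotheses ${\bf (H_0)}$--${\bf (H_5)}$ hold for an appropriate choice of the parameters $\gamma_1,\gamma_2,\delta,\ve,s$; the two assertions then follow directly from Theorem \ref{mainresult}.

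I would first dispose of the structural hypotheses. For ${\bf (H_0)}$: the self-adjoint realisation of $A=-\partial_x^{(4)}+\nu\partial_x^{(2)}$ on $H=L_0^2([0,1])$ has the orthonormal eigenbasis $(e_k)_{k\ge1}$ with eigenvalues $\lambda_k=4\pi^2k^2(4\pi^2k^2+\nu)$, which are bounded below by $16\pi^4k^4$ and tend to $+\infty$, so $\|e^{tA}\|\le e^{-\lambda_1 t}$ and $A^{-1}$ is compact. ${\bf (H_1)}$ is immediate for $B(u)=-\partial_x^{(2)}\bigl((\partial_x u)^2\bigr)$, read as a vector field that vanishes off its natural domain. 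For ${\bf (H_2)}$ one computes $\|\sqrt Q e^{sA}\|_{HS}^2=\sum_k q_k e^{-2\lambda_k s}\le\|q\|_\infty\sum_k e^{-c s k^4}\le C\,s^{-1/4}$, so the weighted integral $\int_0^t s^{-2\theta}\|\sqrt Q e^{sA}\|_{HS}^2\,ds$ is finite for every exponent $\theta\in\bigl(0,\tfrac{3}{8}\bigr)\subset\bigl(0,\tfrac{1}{2}\bigr)$. Finally, ${\bf (H_3)}$ --- existence of a stationary mild solution --- is exactly what Bl\"omker and Hairer prove in \cite{BH}.

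The two quantitative hypotheses are essentially assembled already in the lines preceding the corollary. For ${\bf (H_4)}$, boundedness of $(q_k)$ together with $\lambda_k\sim k^4$ gives $Z_{\gamma_2,\delta,\ve}\le C\sum_k k^{-8(\delta-\gamma_2-\ve)}<\infty$ as soon as $\delta-\gamma_2-\ve>\tfrac{1}{8}$, which is compatible with $\delta\in\bigl(0,\tfrac{1}{2}\bigr)$ once $\gamma_2\in\bigl[\tfrac{5}{16},\tfrac{3}{8}\bigr)$ and $\ve$ is small. Hypothesis ${\bf (H_5)}$ is the Lyapunov bound \eqref{LTF}, obtained from the integration-by-parts identity \eqref{T-est1}, Young's inequality, the Sobolev embedding $W^{5/4,2}(0,1)\hookrightarrow W^{1,4}(0,1)$ and the norm equivalence $\|\cdot\|_{W^{5/4,2}}\sim\|\cdot\|_{5/16}$; after absorbing $\|y\|_0^2\le\lambda_1^{-1}\|y\|_{1/2}^2$ and $\|w\|_{5/16}^2\le\|w\|_{5/16}^8+1$ into the right-hand side, \eqref{LTF} takes the exact form of ${\bf (H_5)}$ with $\gamma_1=\tfrac{1}{2}$, $\gamma_2=\tfrac{5}{16}$, $s=8$ and suitable positive constants $\alpha,\beta,\gamma,\delta$.

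With all hypotheses verified, Theorem \ref{mainresult} gives $\int\|x\|_0^{2p}\,\mu(dx)<\infty$ for all $p\ge0$ --- this is (i) --- and $\int\|x\|_\sigma^2\|x\|_0^{2p}\,\mu(dx)<\infty$ for $p\ge0$ and $\sigma<\gamma_1=\tfrac{1}{2}$. The endpoint $\sigma=\tfrac{1}{2}$ claimed in (ii) is in fact the most direct case: inequality \eqref{B1} in the proof of Theorem \ref{mainresult} already yields $\int_0^t\E\bigl(\|Y_\lambda(s)\|_{1/2}^2\|Y_\lambda(s)\|_0^{2(p-1)}\bigr)\,ds<\infty$, and adding the stochastic-convolution contributions --- which have finite moments of every order by Corollary \ref{Cor} --- and then integrating against $\mu$ using stationarity gives the claim. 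The only real point to watch is the bookkeeping of indices: one must pick $\gamma_1,\gamma_2,\delta,\ve,s$ simultaneously so that $\delta-\gamma_2-\ve>\tfrac{1}{8}$ while $\gamma_2$ stays no smaller than the regularity $\tfrac{5}{16}$ at which $w$ enters \eqref{LTF}, $s=8$ matches the Sobolev exponent produced in \eqref{T-est1}--\eqref{T-est2}, and $\delta<\tfrac{1}{2}$; the fourth-order smoothing of $A$ is precisely what leaves enough room for this to be possible.
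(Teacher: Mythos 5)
Your route is the paper's route: the corollary is obtained by checking ${\bf (H_0)}$--${\bf (H_5)}$ for \eqref{BH} and invoking Theorem \ref{mainresult}, and your verification --- eigenvalues $\lambda_k=4\pi^2k^2(4\pi^2k^2+\nu)\sim k^4$, the Hilbert--Schmidt bound for ${\bf (H_2)}$, Bl\"omker--Hairer for ${\bf (H_3)}$, ${\bf (H_4)}$ from $8(\delta-\gamma_2-\ve)>1$ with $\delta<\tfrac 12$, and ${\bf (H_5)}$ from \eqref{LTF} with $\gamma_1=\tfrac 12$, $\gamma_2=\tfrac{5}{16}$, $s=8$ after the absorptions $\|y\|_0^2\le\lambda_1^{-1}\|y\|_{1/2}^2$ and $\|w\|_{5/16}^2\le\|w\|_{5/16}^8+1$ --- agrees with what the paper does (the paper only records \eqref{LTF} and the ${\bf (H_4)}$ computation explicitly). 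Part (i), and part (ii) for $\sigma<\tfrac 12$, therefore follow exactly as you say.

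The one place where you go beyond Theorem \ref{mainresult}, namely the endpoint $\sigma=\tfrac 12$ in (ii), contains a genuine gap. You assert that the stochastic-convolution contribution ``has finite moments of every order by Corollary \ref{Cor}'', but Corollary \ref{Cor} controls $\|W_{A-\lambda}(t)\|_{\gamma}$ only when $Z_{\gamma,\delta,\ve}=\sum_k\lambda_k^{-2(\delta-\gamma-\ve)}q_k<\infty$; with $\lambda_k\sim k^4$ and $(q_k)$ merely bounded in $\ell^\infty$ this forces $\delta-\gamma-\ve>\tfrac 18$, hence $\gamma<\delta-\tfrac 18<\tfrac 38$, so $\gamma=\tfrac 12$ is out of reach. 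Indeed, for $q_k\equiv 1$ one has
\begin{equation*}
\E\left(\|W_{A-\lambda}(t)\|_{1/2}^2\right)=\sum_{k\ge 1}\frac{\lambda_k q_k}{2(\lambda+\lambda_k)}\left(1-e^{-2(\lambda+\lambda_k)t}\right)=+\infty ,
\end{equation*}
so the $V_{1/2}$-bound on $Y_\lambda$ coming from \eqref{B1} cannot be transferred to $X=Y_\lambda+W_{A-\lambda}$ in the way you propose. To be fair, the paper itself states the endpoint $\sigma\le\tfrac 12$ without comment even though Theorem \ref{mainresult}(ii) only asserts $\sigma<\gamma_1$; reaching $\sigma=\tfrac 12$ would require either additional decay of $(q_k)$ or a separate argument, and your write-up should either restrict to $\sigma<\tfrac 12$ (in fact $\sigma<\tfrac 38$ wherever moments of $\|W_{A-\lambda}\|_\sigma$ are invoked) or supply that missing ingredient.
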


\end{document}